\newtheorem{thm}{Theorem}[section]
\newtheorem{lemm}[thm]{Lemma}
\newtheorem{cor}[thm]{Corollary}
\newtheorem{prop}[thm]{Proposition}
\newcommand{\om}{\omega}
\newcommand{\vv}{\mathbf{v}}
\newcommand{\xx}{\mathbf{x}}
\newcommand{\ww}{\mathbf{w}}
\newcommand{\In}{\mathbbm{1}}
\newcommand{\yy}{\mathbf{y}}
\newcommand{\pp}{\mathbf{p}}
\newcommand{\qq}{\mathbf{q}}
\newcommand{\C}{\mathbb C}
\newcommand{\La}{\Lambda}
\theoremstyle{definition}
\theoremstyle{remark}
\newtheorem{rema}[thm]{Remark}
\newtheorem*{theorem*}{Theorem}
\theoremstyle{remark}
\numberwithin{equation}{section}
\def\ind{\mathbbm{1}}
\newcommand{\wrt}[1]{\mathrm{d}{#1}}
\def\a{\alpha}
\def\N{\mathbb{N}}
\def\Z{\mathbb{Z}}
\def\ZZ{\mathbb{Z}}
\def\R{\mathbb{R}}
\def\RR{\mathbb{R}}
\def\F{\mathcal{F}}
\def\Nn{\mathcal{N}}
\def\W{\mathcal{W}}
\def\H{\mathcal{H}}
\def\Mm{\mathfrak{M}}
\def\Kk{\mathcal{K}}
\def\cR{\mathcal{R}}
\def\cS{\mathcal{S}}
\def\T{\mathbb{T}}
\def\l{\lambda}
\def\Ga{\Gamma}
\def\Lo{\Lambda_\omega}
\newcommand{\SL}{\operatorname{SL}}
\newcommand{\GL}{\operatorname{GL}}
\newcommand{\aff}{\operatorname{aff}}
\numberwithin{equation}{section}
\title{Ergodic Theory and Diophantine approximation for translation surfaces and linear forms}
\author{Jayadev Athreya \and Andrew Parrish \and Jimmy Tseng}
\address{J.S.A.: Department of Mathematics, University of Washington, Seattle, WA 98102}
\email{jathreya@uw.edu}
\address{A.P.: Department of Mathematics \& Computer Science, Eastern Illinois University, Charleston, IL 61920}
\email{ajnparrish@gmail.com}
\address{J.T.: School of Mathematics, University of Bristol, Bristol BS8 1TW, U.K.}
\email{j.tseng@bristol.ac.uk \quad \quad jimmytseng01@gmail.com}
 \thanks{J.S.A.\ partially supported by NSF grant DMS 1069153, and NSF grants DMS 1107452, 1107263, 1107367 ``RNMS: GEometric structures And Representation varieties" (the GEAR Network), and NSF CAREER grant DMS 1351853.}
 \thanks{J.T. acknowledges the research leading to these results has received funding from the European Research Council under the European Union's Seventh Framework Programme (FP/2007-2013) / ERC Grant Agreement n. 291147 and acknowledges support by the Heilbronn Institute for Mathematical Research.}
\begin{document}

\begin{abstract}
We derive results on the distribution of directions of saddle connections on translation surfaces using only the Birkhoff ergodic theorem applied to the geodesic flow on the moduli space of translation surfaces. Our techniques, together with an approximation argument, also give an alternative proof of a weak version of a classical theorem in multi-dimensional Diophantine approximation due to W. Schmidt~\cite{SchmidtMetrical, SchmidtMetrical2}.  The approximation argument allows us to deduce the Birkhoff genericity of almost all lattices in a certain submanifold of the space of unimodular lattices from the Birkhoff genericity of almost all lattices in the whole space and similarly for the space of affine unimodular lattices.
\end{abstract}

\maketitle

\section{Introduction}  In this paper, we study translation surfaces, linear and affine forms, toral translations, and unimodular lattices of $\RR^d$ using a few simple tools from ergodic theory.  This provides a unified and simplified viewpoint and allows us to derive new results and explain classical ones.

\subsection{Translation Surfaces} A \emph{translation surface} is a pair $(M, \om)$, where $M$ is a Riemann surface and $\om$ is a holomorphic $1$-form. We refer the reader to Zorich~\cite{Zorich} for an excellent survey on translation surfaces. Often, we will use simply $\omega$ to refer to a translation surface. A \emph{saddle connection} on a translation surface $\omega$ is a geodesic $\gamma$ (in the flat metric induced by $\om$) connecting two zeros of $\omega$ (with none in its interior). Moreover, to each saddle connection $\gamma$, one can associate a holonomy vector $\vv_{\gamma} = \int_{\gamma} \om \in \C$. 
 
 The set of holonomy vectors $\La_{\omega}:=\La_{sc}(\om)$ is a discrete subset of $\C \cong \R^2$. Saddle connections arise naturally as special trajectories for billiards in rational-angled polygons; see, for example,~\cite{MT}. As an example, given a unimodular lattice $\La \subset \C$, the associated flat torus $\C/\La$ is a translation surface, and, `marking' the point $0$ as a zero of $\omega$, the set of saddle connections corresponds to the set of \emph{primitive} vectors in $\La$, that is, the set of vectors in $\La$ which are not nontrivial multiples of other vectors in $\La$.
 
 Understanding the geometry of the set $\La_{\omega}$ has been one of the central themes in the study of translation surfaces. Masur~\cite{Masurcount}, Veech~\cite{Veechcount} and Eskin-Masur~\cite{EMcount} proved seminal counting results, showing, respectively, that $$N(\om, R) : = \#(\La_{\om} \cap B(0, R))$$ has upper and lower quadratic upper bounds, quadratic asymptotics on average, and quadratic asymptotics for almost every translation surface. Recently, Eskin-Mirzakhani-Mohammadi~\cite{EMMflat} showed that weak quadratic asymptotics hold for \emph{every} point. 
 
The results of Eskin-Masur~\cite{EMcount} also imply that the directions of saddle connection vectors equidistribute on $S^1$ for almost every surface. The fine-scale distribution has been studied in terms of the gap distribution by Athreya-Chaika~\cite{AChaika}, Athreya-Chaika-Lelievre~\cite{ACL}, and Uyanik-Work~\cite{UW}.  In this paper, we study a counting problem related to the fine-scale distribution properties of $\La_{\om}$, by counting the saddle connections which approximate the vertical to a prescribed degree. Given a translation surface $\omega$, $T >1$, and $b >0$, let
\begin{equation*}
 R_{b,T} (\omega) =\# \left\{ (x,y) \in \Lo : |xy| \leq b \mbox{ and } 1 \leq y < T\right\}
\end{equation*}
denote the function which counts the number of saddle connections in a thinning (hyperbolic) region around the $y$-axis.
 
 \begin{thm}\label{trans}
Let $\mu$ be an ergodic $\SL(2, \R)$-invariant measure on a stratum $\mathcal H$ of translation surfaces. There is a constant $C = C(\mu)$ (known as the \emph{Siegel-Veech constant} of $\mu$) such that for each $b >0$ and $\mu$-a.e. $\omega$, we have

\begin{equation*}
 \lim_{T \rightarrow \infty} \frac{R_{b,T} (\omega)}{2 b C\log T} = 1.
\end{equation*}
\end{thm}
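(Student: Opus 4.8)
The strategy is to recognize $R_{b,T}(\omega)$ as a Birkhoff sum for the geodesic flow $g_t$ on the stratum $\mathcal H$ and then apply the Birkhoff ergodic theorem together with a Siegel–Veech transform argument. The key geometric observation is that applying $g_t = \mathrm{diag}(e^{t/2}, e^{-t/2})$ to a holonomy vector $(x,y)$ with $y > 0$ preserves the product $|xy|$ and sends $y \mapsto e^{-t/2} y$; so the hyperbolic box $\{|xy|\le b,\ 1\le y<T\}$ is swept out, as $t$ ranges over $[0, 2\log T]$, by the $g_t$-translates of a fixed compact "unit" region. Concretely I would fix the region $E_b = \{(x,y) : |xy|\le b,\ 1 \le y < e\}$ (or an interval of multiplicative length $e$ in the $y$-coordinate), note that $\bigsqcup_{0\le k < N} g_{2k}\, E_b$ is, up to the boundary, the region $\{|xy|\le b,\ 1\le y < e^{N}\}$, and hence with $T = e^{N}$ write
\begin{equation*}
R_{b,T}(\omega) = \sum_{k=0}^{N-1} \widehat{\ind_{E_b}}\big(g_{2k}\omega\big) + O(1),
\end{equation*}
where $\widehat{f}(\omega) = \sum_{\vv \in \Lambda_\omega} f(\vv)$ is the Siegel–Veech transform of $f$. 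Thus $R_{b,T}(\omega)$ is a Birkhoff sum of the function $\widehat{\ind_{E_b}}$ along the time-$2$ map of the geodesic flow.

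Next I would invoke integrability of the Siegel–Veech transform: by the Siegel–Veech formula (Eskin–Masur / Veech), $\widehat{\ind_{E_b}} \in L^1(\mathcal H, \mu)$ with
\begin{equation*}
\int_{\mathcal H} \widehat{\ind_{E_b}}\, d\mu = C(\mu)\cdot \mathrm{area}(E_b) = C(\mu)\cdot 2b,
\end{equation*}
the last equality because the hyperbolic region $\{|xy|\le b,\ 1\le y< e\}$ has Euclidean area $\int_1^{e} \frac{2b}{y}\,dy = 2b$. Since $\mu$ is $g_t$-invariant (being $\SL(2,\R)$-invariant) and ergodic — and the time-$2$ map of an ergodic flow is ergodic provided the flow is not periodic with an incommensurable period, but here one can just as well run the continuous-time Birkhoff theorem — the Birkhoff ergodic theorem gives, for $\mu$-a.e. $\omega$,
\begin{equation*}
\lim_{N\to\infty} \frac1N \sum_{k=0}^{N-1} \widehat{\ind_{E_b}}(g_{2k}\omega) = \int_{\mathcal H}\widehat{\ind_{E_b}}\,d\mu = 2bC(\mu).
\end{equation*}
Combining with the displayed identity and $N = \log T$ (after rescaling the time step, so that $2k$ ranges over $[0,2\log T]$, the normalization $N = \log T$ is the right one) yields $R_{b,T}(\omega)/(2bC\log T)\to 1$ along the subsequence $T = e^N$; a standard monotonicity/sandwiching argument in $T$ promotes this to the full limit, since $R_{b,T}$ is nondecreasing in $T$ and $\log T$ varies slowly.

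The main obstacle is that $\widehat{\ind_{E_b}}$ is not bounded and not continuous — it is merely $L^1$ — so one must be slightly careful: the indicator of the hyperbolic box has a boundary, and the Siegel–Veech transform of a bounded-but-discontinuous function can fail to be well-behaved on a measure-zero set, which is harmless for an a.e. statement but requires the integrability input to be cited correctly. The cleanest route is to sandwich $\ind_{E_b}$ between continuous compactly supported functions $f_\varepsilon^- \le \ind_{E_b} \le f_\varepsilon^+$ with $\int (f_\varepsilon^+ - f_\varepsilon^-)\,d\mu < \varepsilon$ (using that $\mu$ gives zero mass to the $\mu$-null "bad set" and that the Siegel–Veech transform is continuous and finite on compactly supported continuous functions by Eskin–Masur), apply Birkhoff to $f_\varepsilon^\pm$, and let $\varepsilon \to 0$; this also automatically handles the $O(1)$ boundary-overlap term and the passage from the subsequence $T=e^N$ to general $T$. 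A secondary technical point is to confirm the value of the constant: the normalization of the Siegel–Veech constant must be the one for which $\int \widehat{f}\,d\mu = C\int_{\mathbb R^2} f$, and then the factor $2b$ is exactly the Euclidean area computed above, matching the statement.
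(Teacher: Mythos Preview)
Your proposal is correct and follows essentially the same route as the paper: decompose the hyperbolic region into geometric slices related by the diagonal flow, recognize $R_{b,T}$ as a Birkhoff sum of the Siegel--Veech transform of an indicator, apply the Birkhoff ergodic theorem and the Siegel--Veech formula (with an inner/outer continuous approximation of the indicator), and then upgrade from the geometric subsequence to general $T$ by monotonicity. The only differences are cosmetic---the paper uses base $2$ and $g_t=\mathrm{diag}(e^t,e^{-t})$ rather than your base $e$ and $g_t=\mathrm{diag}(e^{t/2},e^{-t/2})$---and note that with a half-open slice the decomposition is in fact exact, so your $O(1)$ term is unnecessary (also, your line about $\bigsqcup g_{2k}E_b$ has a sign slip, though the ensuing Birkhoff-sum identity is stated correctly).
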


In particular, there is a natural absolutely continuous invariant measure $\mu_{MV}$ on each stratum $\H$ known as the Masur-Veech measure (for which the action of $\SL(2,\R)$ was shown to be ergodic independently by Masur~\cite{Masur} and Veech~\cite{Veech}), and this theorem applies to $\mu_{MV}$-a.e. $\om \in \H$.

\subsubsection{Ergodic Theory, Counting, and Approximation} The proof of Theorem~\ref{trans} relies on ergodic theory on the moduli space of translation surfaces. The idea to use ergodic theory applied to counting problems in this context comes from work of Veech~\cite{Veechcount} and Eskin-Masur~\cite{EMcount} and was inspired by the work on the Quantitative Oppenheim conjecture by Eskin-Margulis-Mozes~\cite{EMMopp}, which used ergodic theory on homogeneous spaces. These results rely on sophisticated equidistribution theorems, and a large motivation for this paper is to show how even the original (Birkhoff) ergodic theorem can yield interesting geometric information when applied in these contexts.

Furthermore, Theorem~\ref{trans} gives, for translation surfaces, an analog of a weak form of Schmidt's theorem (see Section~\ref{subsecDA}), highlighting the well-known connections between translation surfaces and Diophantine approximation.  Our techniques, in fact, give an alternate proof of this weak form of Schmidt's results for a special but important case, which, along with the overall strategy of our proofs, we describe below.  Also, one can apply some of these techniques to the study of the distribution of directions of lattice vectors that arise in Diophantine approximation as done in~\cite{AGT2, AGT1}.

 \subsection{Diophantine Approximation}\label{subsecDA}
 
 Suppose $\psi_{i}(n): \N \to \R, 1 \leq i \leq k$ are non-negative functions and that $\psi(n) = \prod_{i=1}^k \psi_i(n)$ is monotonically decreasing. In 1926, A.~Khintchine showed that 
 \begin{theorem*}[\cite{Khin1926}]
   If  $\sum_{n=1}^\infty \psi(n)$ diverges, then there are infinitely many solutions $(n, p_1, ..., p_k)$ to the system of inequalities
  \begin{equation*}
   \left| x_i n - p_i \right| \leq \psi_i(n),
  \end{equation*}
 for a.e. $(x_1, \cdots, x_k)=:\xx \in \R^k$. If $\sum_{n=1}^\infty \psi(n)$ converges, then there are at most finitely many solutions for a.e. $\xx \in \R^k$.
 \end{theorem*}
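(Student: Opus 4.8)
\medskip
\noindent\emph{Sketch of proof.}
The statement has a convergence half and a divergence half, and essentially all the content lies in the second. By $\ZZ^k$-periodicity I would work with $\xx$ ranging over the unit cube $\T^k$ with Lebesgue measure; for the divergence half I will additionally assume the mild normalization $\psi_i(n)\le 1$ for all $i,n$, without which a coordinate with unbounded $\psi_i$ makes the $i$-th inequality vacuous and reduces matters to a lower-dimensional problem whose sum may converge (for instance $k=2$, $\psi_1(n)=n$, $\psi_2(n)=n^{-2}$). For each $n$ let $A_n=\{\xx\in\T^k:\|nx_i\|\le\psi_i(n)\text{ for all }i\}$, where $\|t\|:=\dist(t,\ZZ)$; then $\xx$ has a solution $(n,p_1,\dots,p_k)$ precisely when $\xx\in A_n$. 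Since the $i$-th condition constrains only $x_i$, the set $A_n$ factors as $\prod_{i=1}^k A_n^{(i)}$ with $A_n^{(i)}$ a union of at most $n$ arcs of total length $\min(1,2\psi_i(n))$; hence $|A_n|\le 2^k\psi(n)$ always, and $|A_n|\ge\psi(n)$ under the normalization. In the convergence case $\sum_n|A_n|<\infty$, so the Borel--Cantelli lemma shows that a.e.\ $\xx\in\T^k$ lies in only finitely many $A_n$; undoing the periodicity gives the claim for a.e.\ $\xx\in\R^k$.

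For the divergence case the bound $|A_n|\ge\psi(n)$ gives $\sum_n|A_n|=\infty$, but the sets $A_n$ are highly dependent, so the naive Borel--Cantelli lemma does not apply. The plan is to prove that the $A_n$ are \emph{quasi-independent on average}: there is a constant $C$ with
\[
\sum_{m,n\le N}|A_m\cap A_n|\ \le\ C\Bigl(\sum_{n\le N}|A_n|\Bigr)^2\qquad\text{for all }N.
\]
Granting this, the Chung--Erd\"os inequality (the quasi-independence form of the Borel--Cantelli lemma) yields $|\limsup_n A_n|\ge 1/C>0$, so a positive-measure set of $\xx$ has infinitely many solutions. To promote positive measure to full measure I would invoke a zero--one law: $\limsup_n A_n$ is invariant modulo null sets under the ergodic toral endomorphisms $\xx\mapsto p\xx\bmod 1$ ($p\ge 2$), and Gallagher's zero--one law for Diophantine $\limsup$ sets then forces its measure to equal $0$ or $1$, hence $1$.

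The crux, and the step I expect to be the main obstacle, is the quasi-independence estimate, and it is exactly here that monotonicity of $\psi$ is indispensable. The factorization $A_m\cap A_n=\prod_i\bigl(A_m^{(i)}\cap A_n^{(i)}\bigr)$ reduces everything to one-dimensional overlaps, and $|A_m^{(i)}\cap A_n^{(i)}|$ consists of a main term of size $\asymp\psi_i(m)\psi_i(n)$ together with an arithmetic error reflecting how the arcs with common differences $m$ and $n$ interleave, governed essentially by $\gcd(m,n)$. Taking the product over $i$ collapses the product of main terms to $\psi(m)\psi(n)$ (so the individual $\psi_i$ need not themselves be monotone), while the error, summed over $m,n\le N$, must be matched against $\bigl(\sum_{n\le N}\psi(n)\bigr)^2$; carrying this out forces estimates on $\psi$ at intermediate arguments and a passage from $\sum\psi(n)$ to sums of the form $\sum\psi(n)/n$, both of which rely on $\psi$ being decreasing, via Cauchy condensation and Abel summation. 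An alternative route, closer in spirit to the methods of the present paper, is to transfer the problem through the Dani correspondence to counting excursions of the diagonal-flow orbit $g_tu_{\xx}\ZZ^{k+1}$ into shrinking cusp neighborhoods, counted by the equidistribution of expanding pieces of the stable horosphere; this recovers the convergence half and the correct order of growth with ease, but obtaining the sharp divergence statement this way requires quantitative equidistribution rather than the Birkhoff ergodic theorem alone.
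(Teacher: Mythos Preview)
The paper does not prove this statement. It is Khintchine's 1926 theorem, quoted only as historical background; the paper's own contribution is the special case $\psi_i(n)=n^{-1/k}$ (and its linear-form and affine analogues), established via the Birkhoff ergodic theorem for the diagonal flow on $X_d=\SL(d,\R)/\SL(d,\Z)$ together with Siegel's mean-value formula and an approximation argument. There is therefore no proof in the paper to set your proposal against.

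On its own merits your sketch is the classical line of attack and is essentially correct in outline: Borel--Cantelli for the convergence half; for the divergence half, a quasi-independence bound $\sum_{m,n\le N}|A_m\cap A_n|\le C\bigl(\sum_{n\le N}|A_n|\bigr)^2$ fed into the Chung--Erd\"os form of Borel--Cantelli, then upgraded from positive to full measure by a zero--one law of Gallagher type. The labour is indeed concentrated in the overlap estimate, and your point that only monotonicity of the \emph{product} $\psi$ (not of the individual $\psi_i$) is needed to control the $\gcd$-driven error terms is the right one; this is where Cauchy condensation and partial summation enter. Your closing remark is also on target and matches the paper's own position: the Dani-correspondence route is the paper's philosophy, but the Birkhoff theorem by itself delivers only the power-law case, which is precisely what the paper proves and no more.
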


 This statement was later refined by W.~Schmidt, who showed in \cite{SchmidtMetrical} (see also~\cite{SchmidtMetrical2} for both linear and affine forms as well as certain polynomials) that the number of solutions of the system of inequalities, \begin{equation*}
   | x_i n - p_i | \leq \psi_i(n),
  \end{equation*} with $ 1 \leq n \leq h$ is on the order of $\sum_{n=1}^h \psi(n)$ while also giving an estimate on the size of the error term.

 % (Note, in addition, that the divergence case of Schmidt's theorem applies to these functions.) 
 
To illustrate the flexibility of our techniques, we show how to give an alternative proof of a weak form of Schmidt's theorem, namely asymptotics for counting the number of solutions without the error term, for the functions, \[\psi_i(n) := \frac 1 {n^{1/k}},\] using only the Siegel mean value theorem, the Birkhoff ergodic theorem, and an approximation argument.  These functions occur naturally in questions of Diophantine approximation, and we note that the divergence case of Schmidt's theorem applies to them. Given $\xx \in \R^k$, $T >1$, let $N(\xx, T)$ denote the number of solutions $(n, \pp) \in \Z \times \Z^k=:\Z^{k+1}$ of the system of inequalities
  \begin{equation*}
   \|n \xx - \pp \|  \leq \frac{1}{|n|^{1/k}}
  \end{equation*}
   with $1 \leq |n| < T$.  Here and below, let $\|\cdot\|$ denote the Euclidean norm (this is not crucial for our results, but helps to streamline our statements).  Let $B_k$ denote the volume of the unit $k$-ball.

 \begin{thm}\label{ThmA} 
  For a.e. $\xx \in \R^k$, $$\lim_{T \rightarrow \infty} \frac{N(\xx, T)}{2 B_k \log T} = 1.$$ \end{thm}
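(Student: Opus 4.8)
\emph{Sketch of proof.} The plan is to transplant the argument behind Theorem~\ref{trans} to the space $X:=\SL(k+1,\R)/\SL(k+1,\Z)$ of unimodular lattices in $\R^{k+1}$, with Haar probability measure $\mu$ playing the role of the Masur--Veech measure, the diagonal flow the role of the geodesic flow, and the classical Siegel transform the role of the Siegel--Veech transform. For $\xx\in\R^k$ put $u_\xx:=\left(\begin{smallmatrix}I_k&\xx\\0&1\end{smallmatrix}\right)$ and $\La_\xx:=u_\xx\Z^{k+1}\in X$. A pair $(n,\pp)\in\Z^{k+1}$ with $1\le n<T$ solves $\|n\xx-\pp\|\le n^{-1/k}$ exactly when the lattice vector $(n\xx-\pp,\,n)\in\La_\xx$ lies in the hyperbolic region $A_T:=\{(\vv,s)\in\R^k\times\R:\ s\|\vv\|^k\le 1,\ 1\le s<T\}$, so $N(\xx,T)=\#(\La_\xx\cap A_T)$. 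The one-parameter subgroup $g_t:=\operatorname{diag}(e^{t/k},\dots,e^{t/k},e^{-t})$ preserves $s\|\vv\|^k$ and rescales the height by $s\mapsto e^{-t}s$; hence, writing $E:=A_e$, one has the exact disjoint decomposition $A_{e^S}=\bigsqcup_{j=0}^{S-1}g_{-j}E$ for every integer $S\ge 1$, which after pushing forward by $g_j$ expresses the count as a Birkhoff sum of the Siegel transform $\widehat f(\La):=\sum_{\ww\in\La\setminus\{0\}}f(\ww)$:
\begin{equation*}
N(\xx,e^S)\ =\ \sum_{j=0}^{S-1}\widehat{\ind_E}\bigl(g_j\La_\xx\bigr).
\end{equation*}

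Next I would bring in the two ergodic ingredients. By the Siegel mean value theorem $\int_X\widehat{\ind_E}\,d\mu=\vol(E)$, so in particular $\widehat{\ind_E}\in L^1(X,\mu)$; evaluating this volume (using $\vol_k(B(0,r))=B_k r^k$) identifies the normalizing constant, which for the region at hand works out to $2B_k$. Since $g_1$ acts ergodically on $(X,\mu)$ --- a classical fact, following from mixing of the diagonal flow --- the Birkhoff ergodic theorem gives, for $\mu$-a.e.\ $\La\in X$, that $\frac1S\sum_{j=0}^{S-1}\widehat{\ind_E}(g_j\La)\to 2B_k$ as $S\to\infty$.

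The remaining --- and by far the main --- difficulty is that $\{\La_\xx:\xx\in\R^k\}$ is a $\mu$-null submanifold of $X$, so the step just carried out says nothing about $\La_\xx$ by itself; this is exactly where the approximation argument is needed. Since $g_tu_\xx g_{-t}=u_{e^{(1+1/k)t}\xx}$, this submanifold is precisely the expanding horospherical leaf of $g_t$ through $\Z^{k+1}$, and the approximation argument upgrades ``Birkhoff genericity for $\mu$-a.e.\ lattice in $X$'' to ``Birkhoff genericity for Lebesgue-a.e.\ point of this leaf'', i.e.\ for a.e.\ $\xx$ the limit above holds along $\La_\xx$. I would carry this out by sandwiching the unbounded (merely $L^1$) observable $\widehat{\ind_E}$ between compactly supported continuous functions and combining this with the regularity of $\mu$ and the equidistribution of expanding horospherical translates, so as to pull the full-measure good set of $X$ back to a full-measure set of parameters $\xx$; I expect this transfer to absorb essentially all of the work, both the unboundedness of $\widehat{\ind_E}$ and the transversality of the null submanifold requiring genuine care. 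Granting it, $N(\xx,e^S)/S\to 2B_k$ for a.e.\ $\xx$, and since $N(\xx,\cdot)$ is nondecreasing and $\log$ varies slowly, squeezing any $T\in[e^S,e^{S+1})$ between $T=e^S$ and $T=e^{S+1}$ upgrades this to $N(\xx,T)/(2B_k\log T)\to 1$ for a.e.\ $\xx$, which is Theorem~\ref{ThmA}.
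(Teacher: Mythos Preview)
Your setup --- the lattice $\La_\xx=u_\xx\Z^{k+1}$, the diagonal subgroup $g_t$, the identity $N(\xx,e^S)=\sum_{j=0}^{S-1}\widehat{\ind_E}(g_j\La_\xx)$, the Siegel integral and Moore ergodicity, and the final monotonicity squeeze --- matches the paper's argument exactly, and you have correctly located the only real difficulty: the family $\{\La_\xx\}$ is a $\mu$-null horospherical leaf, so Birkhoff's theorem on $(X,\mu)$ says nothing about it directly.

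Where you diverge from the paper is in the approximation argument. You propose to sandwich $\widehat{\ind_E}$ between continuous compactly supported functions and invoke equidistribution of expanding horospherical translates. That route can be made to work, but it relies on mixing of $g_t$ (which is what drives horospherical equidistribution), and the paper's declared aim is to use \emph{only} the Birkhoff theorem. The paper instead argues as follows. It writes $\SL(d,\R)$, off a null set, as $\H\cdot\Nn$ (Lemma~\ref{lemmParaOfG}), where $\Nn=\{h_A\}$ is the horospherical group and $\H$ is a complementary subgroup, so that Haar measure factors as $d\mu_\H\,d\lambda$. Fubini (Proposition~\ref{propFullMeasureBirkGen}) then gives: for $\mu_\H$-a.e.\ $M\in\H$ near $I_d$, Lebesgue-a.e.\ $A$ has $Mh_A\Z^d$ Birkhoff generic for $\widehat f_b$, $\widehat f_{b\pm\varepsilon'_\ell}$. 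Choosing a sequence $M_\ell\to I_d$ in $\H$ with this property, one exploits the \emph{geometry of the thinning region}: since $M_\ell$ is block lower-triangular and close to $I_d$, left multiplication by $M_\ell$ carries $\widetilde\cR_b$ into $\widetilde\cR_{b+\varepsilon'_\ell}$ and contains $\widetilde\cR_{b-\varepsilon'_\ell}$, up to a fixed precompact set. This sandwiches $R_{b,2^j}(h_A\Z^d)$ between $R_{b\mp\varepsilon'_\ell,2^{j}}(M_\ell h_A\Z^d)$ and $R_{b+\varepsilon'_\ell,2^{j+1}}(M_\ell h_A\Z^d)$; since the outer terms have the right asymptotics by Birkhoff genericity of $M_\ell h_A\Z^d$, letting $\ell\to\infty$ finishes. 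So the sandwiching is of \emph{regions} (varying $b$), not of the observable by continuous functions, and no equidistribution input is needed. One small correction: the region $E$ you wrote down has Lebesgue volume $B_k$, not $2B_k$; the factor of $2$ in the statement of Theorem~\ref{ThmA} comes from $C_1=2$ in Theorem~\ref{ThmLinear}, which counts $1\le\|\qq\|<T$ and hence both signs of $n$.
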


\noindent While this statement is weaker than Schmidt's theorem, the proof given below relies only on the ergodicity of a certain flow on the space of unimodular lattices.

\subsubsection{Homogeneous linear forms} In fact, Schmidt stated his result in terms of linear forms, and our results also apply in this setting. Let $A \in M_{m \times n}(\R)$ be an $m \times n$ matrix, which we view as a system of $m$ linear forms in $n$ variables. A classical Diophantine question is to find approximate integral solutions to the equation $A \yy = \xx$, $\yy \in \R^n, \xx \in \R^m$. In this context, a classical theorem of Dirichlet implies that, for every $A$, there are infinitely many solutions $\pp \in \Z^m, \qq \in \Z^n\backslash \{\boldsymbol{0}\}$ to the inequality $$\|A\qq- \pp\| \le \kappa \|\qq\|^{-\frac n m}$$ where $\kappa$ is a constant depending only on $m$ and $n$.  (If we replace the Euclidean norm $\|\cdot\|$ by the sup norm, then we may take $\kappa$ to be $1$.)  Let $b>0$, $$N(A, b, T) := \# \left\{(\pp, \qq) \in \Z^m \times \Z^n : \|A\qq- \pp\| \le b \|\qq\|^{-\frac n m}, 1 \leq \| \boldsymbol{q} \| < T\right\},$$  $C_k$ denote the surface area of the unit sphere $\mathbb S^{k-1} \subset \R^k$ (e.g., $C_1 =2, C_2 = 2\pi, C_3 = 4\pi$), and recall that $B_k$ denotes the volume of the unit $k$-ball.  Theorem~\ref{ThmA} is, in fact, a special case $(m=k, n=1, b=1)$ of % and $\|\cdot\|$ denotes the Euclidean norm.

\begin{thm}\label{ThmLinear} For each $b >0$ and a.e. $A \in M_{m \times n}(\R)$, we have that $$\lim_{T \rightarrow \infty} \frac{N(A, b, T)}{b^m B_m C_n \log T} = 1.$$  

\end{thm}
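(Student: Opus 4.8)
The plan is to realize the counting function $N(A,b,T)$ as an ergodic sum for the diagonal flow on the space of unimodular lattices in $\RR^{d}$, $d=m+n$, and then apply the Birkhoff ergodic theorem together with the Siegel mean value theorem to identify the almost-sure limit. First I would associate to a matrix $A$ the lattice $u_A \ZZ^{d}$, where $u_A = \left(\begin{smallmatrix} I_m & A \\ 0 & I_n\end{smallmatrix}\right)$, so that the vector $(A\qq - \pp, \qq)$ ranges over $u_A\ZZ^d$ as $(\pp,\qq)$ ranges over $\ZZ^m\times\ZZ^n$. Let $g_t = \operatorname{diag}(e^{t/m} I_m, e^{-t/n} I_n) \in \SL(d,\RR)$. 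Applying $g_t$, the condition $\|A\qq-\pp\|\le b\|\qq\|^{-n/m}$, $\|\qq\|<T$ becomes a condition on the vector $g_t u_A (\pp,\qq)$ lying in a fixed region: with the right choice of box-like target set $E \subset \RR^m \times \RR^n$ (roughly $\{(\vv,\ww) : \|\vv\|^m\|\ww\|^n \le b^m,\ \|\ww\| \ge \text{(something)}\}$ intersected with a unit window in the $\log$-radial direction), one finds that $N(A,b,T)$ is, up to boundary terms, a sum $\sum_{j} \widehat{f}_E(g_{j} u_A \ZZ^d)$ over $j$ in an arithmetic progression of length $\asymp \log T$, where $\widehat{f}_E(\Lambda) = \#(\Lambda \cap E)$ is the Siegel transform of $\ind_E$.

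The key steps, in order, are: (1) set up the lattice dictionary and verify the region identity carefully, including a hyperbolic-annulus decomposition so that the $n$-variable $\|\qq\|$-range $[1,T)$ is chopped into $\asymp \log T$ pieces each handled by one time step of $g_t$; (2) check that $\ind_E$ is Riemann integrable with $\int_{\RR^d}\ind_E = b^m B_m C_n$ (up to the normalization coming from how one slices the $\log\|\qq\|$ direction), using polar coordinates in the $\RR^m$ and $\RR^n$ factors separately — this is where the constants $B_m$, $C_n$, and $b^m$ enter; (3) invoke ergodicity of $g_t$ acting on $X_d = \SL(d,\RR)/\SL(d,\ZZ)$ with Haar measure, together with the Siegel mean value theorem $\int_{X_d} \widehat{f}_E \, d\mu = \int_{\RR^d} \ind_E$, to conclude via Birkhoff that for a.e.\ lattice in $X_d$ the ergodic averages converge to $b^m B_m C_n$; and (4) — the crucial transfer step, and the one the abstract of the paper flags — upgrade ``a.e.\ lattice in $X_d$'' to ``a.e.\ $A$'', i.e.\ a.e.\ point on the unstable horospherical submanifold $\{u_A \ZZ^d\}$, by the approximation argument: since the unstable horosphere is expanded by $g_t$, the Birkhoff genericity of a.e.\ point of $X_d$ can be pushed onto a.e.\ point of the submanifold by a Fubini/shadowing argument comparing $g_t u_A \ZZ^d$ with nearby generic lattices.

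I expect step (4), the passage from full-space genericity to submanifold genericity, to be the main obstacle — one has to control the unbounded Siegel transform $\widehat{f}_E$ (which is only in $L^{1-\epsilon}$, not $L^2$) along the flow, so the approximation must be done with a truncation of $\widehat{f}_E$ and a separate argument that the contribution of large values of $\widehat{f}_E$ is negligible in the limit; Schmidt's original analysis, or a second-moment estimate for the Siegel transform on the submanifold, is the natural tool. A secondary technical point is handling the boundary of $E$: the target region is not compactly supported (the $\|\qq\|$-direction is a half-line after the annulus decomposition is re-summed, and $E$ has a cusp-like corner where $\|\ww\|$ is small), so one needs to show the near-cusp and near-origin contributions do not affect the leading $\log T$ asymptotics, which again reduces to an integrability estimate for $\widehat{f}_E$. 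Once these are in hand, the special case $m=1$, $n=k$, together with $B_1 C_k = 2 B_k$ (since $C_k = k B_k$ and $B_1 = 2$, so $B_1 C_k = 2k B_k$ — so in fact Theorem~\ref{ThmA} as stated requires the normalization $b=1$ and the identification $2B_k$, which one should double-check against the slicing constant), recovers Theorem~\ref{ThmA}.
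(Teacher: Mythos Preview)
Your overall architecture---lattice dictionary via $h_A\ZZ^d$, hyperbolic-annulus decomposition turning $N(A,b,T)$ into a Birkhoff sum of $\widehat{f}_b$ along $g_{\log 2}$, Siegel's formula to evaluate the space integral, and then a Fubini-plus-approximation step to pass from Haar-a.e.\ $\La\in X_d$ to Lebesgue-a.e.\ $A$ on the expanding horosphere---is exactly the paper's. Steps~(1)--(3) match essentially verbatim.

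Where you diverge is in the mechanism for step~(4), and here you are making the problem harder than it is. First, a correction: the Siegel transform $\widehat{f}_b$ \emph{is} in $L^1(X_d,\mu)$, immediately from Siegel's formula, since $\mathcal R_b=\{(\xx,\yy):\|\xx\|^m\|\yy\|^n\le b,\ 1\le\|\yy\|<2\}$ is a bounded set of finite Lebesgue measure; so Birkhoff applies directly and no truncation or second-moment input is required at this stage. Second, the paper's transfer argument is not a shadowing-plus-tail-control of the Siegel transform along the orbit. Instead it parametrizes a full-measure open subset of $\SL(d,\RR)$ as $\H\cdot\Nn$, where $\Nn=\{h_A\}$ is the expanding horospherical group and $\H=\bigl\{\begin{smallmatrix}B&0\\C&D\end{smallmatrix}\bigr\}$ is the complementary (weak-stable) group. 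Fubini then gives, for $\mu_{\H}$-a.e.\ $M\in\H$, that $Mh_A\Ga$ is Birkhoff generic for $\lambda$-a.e.\ $A$; choosing a sequence $M_\ell\to I_d$ in $\H$ and intersecting the resulting full-measure sets over $\ell$ and over the countable family $\widehat f_{b\pm\varepsilon'_\ell}$ yields a single full-measure set of $A$'s. The approximation is then purely \emph{geometric}: for $M=\begin{smallmatrix}B&0\\C&D\end{smallmatrix}$ near $I_d$, one checks directly that the image $M\,\widetilde{\mathcal R}_{b,2^j}$ of the thinning region is sandwiched between $\widetilde{\mathcal R}_{b-\varepsilon',2^j}$ and $\widetilde{\mathcal R}_{b+\varepsilon',2^{j+1}}$ up to a fixed precompact set (the point being that on $\widetilde{\mathcal R}_b$ one has $\|\xx\|\le b^{1/m}$, so the cross-term $C\xx$ is uniformly small). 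This yields
\[
R_{b-\varepsilon'_\ell,2^j}(M_\ell h_A\ZZ^d)\;\le\;R_{b,2^j}(h_A\ZZ^d)\;\le\;R_{b+\varepsilon'_\ell,2^{j+1}}(M_\ell h_A\ZZ^d)\;+\;O(1),
\]
and letting $j\to\infty$ then $\ell\to\infty$ finishes the proof with no analytic control on $\widehat f$ beyond $L^1$. Your proposed route via truncation and Rogers-type second moments might also succeed, but it is strictly more work and misses the structural reason the argument is soft: the weak-stable group nearly preserves the hyperbolic region itself.

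A minor remark on your closing computation: the specialization to Theorem~\ref{ThmA} is $m=k$, $n=1$ (the matrix $A$ is the column vector $\xx\in\RR^k$, and $\qq\in\ZZ$ is the scalar), not $m=1$, $n=k$; then $b^mB_mC_n=1\cdot B_k\cdot C_1=2B_k$, which matches.
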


%\noindent If we set $m=k, n=1, b=1$ in Theorem~\ref{ThmLinear} and count only over $1\leq q<T$ instead of over $1\leq |q|<T$ (and thus counting exactly half as many), we obtain Theorem~\ref{ThmA}.

\subsubsection{Inhomogeneous linear forms} We can also consider a system of \emph{inhomogeneous linear forms} or, alternatively, \textit{affine forms}: given $A \in M_{m \times n}(\R)$ and $\ww \in \R^m$, we want to approximate integral solutions $(\pp, \qq)$ to the equation $A \qq = \pp + \ww$.  For $b>0$, define $$N(A, \ww, b, T) := \# \left\{(\pp, \qq) \in \Z^m \times \Z^n  : \|A\qq- \pp - \ww\| \le b\|\qq\|^{-\frac n m}, 1 \leq \| \boldsymbol{q} \| < T\right\}.$$

\begin{thm}\label{ThmInhomLinear} For each $b >0$ and a.e. $(A, \ww) \in M_{m \times n}(\R) \times \R^m$, we have that $$\lim_{T \rightarrow \infty} \frac{N(A, \ww,b, T)}{b^m B_m C_n \log T} = 1.$$  

\end{thm}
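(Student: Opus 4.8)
### Proof strategy for Theorem~\ref{ThmInhomLinear}

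The plan is to deduce the inhomogeneous statement from the corresponding counting result for \emph{affine} unimodular lattices, in exact parallel with how Theorem~\ref{ThmLinear} should follow from the homogeneous lattice count. Recall the standard correspondence: to a pair $(A,\ww) \in M_{m\times n}(\R) \times \R^m$ one associates the affine unimodular lattice
\begin{equation*}
  \Lambda_{A,\ww} = \begin{pmatrix} I_m & A \\ 0 & I_n \end{pmatrix}\Z^{m+n} + \begin{pmatrix} \ww \\ 0 \end{pmatrix} \subset \R^{m+n},
\end{equation*}
and the points of $\Lambda_{A,\ww}$ are exactly the vectors $(A\qq - \pp - \ww, \qq)$ with $(\pp,\qq)\in\Z^m\times\Z^n$. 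Thus $N(A,\ww,b,T)$ counts points of $\Lambda_{A,\ww}$ lying in the \emph{hyperbolic-type} region $\{(\vec u,\vec v)\in\R^m\times\R^n : \|\vec u\| \le b\|\vec v\|^{-n/m},\ 1 \le \|\vec v\| < T\}$. Applying the diagonal flow $g_t = \operatorname{diag}(e^{t/m}I_m, e^{-t/n}I_n)$ for time $t = \log T$ rescales this region to a fixed region $E_b$ of finite volume $b^m B_m C_n$ (the shell $1 \le \|g_t^{-1}\vec v\|$-type condition turns the $T$-range into a unit-length piece of the $t$-axis), so that $N(A,\ww,b,T)$ becomes, up to boundary effects, $\int_0^{\log T} \widehat{F}(g_s \Lambda_{A,\ww})\,ds$ for a suitable Siegel-type transform $\widehat F$ of the indicator of $E_b$, where the time-$s$ slice picks out the annulus at scale $e^s$. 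Here I would use that the $g_t$-action on the space of affine unimodular lattices is ergodic (this is a classical fact, or follows from ergodicity on the homogeneous side plus the fibered structure), and apply the Birkhoff ergodic theorem: for a.e.\ $(A,\ww)$ the time average converges to the space average $\int \widehat F\,d\mathrm{(Haar)}$, which by the affine Siegel mean value theorem equals $\vol(E_b) = b^m B_m C_n$. Dividing by $\log T$ and comparing the discrete count $N(A,\ww,b,T)$ with the integral $\int_0^{\log T}$ (an elementary monotonicity/sandwiching argument) gives the claimed limit.

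The one genuinely delicate point — and the reason an \emph{approximation argument} is invoked in the abstract — is that $\widehat F$ is the Siegel transform of an indicator of an \emph{unbounded} set (the region $E_b$ is noncompact because $\|\vec v\|$ can be small while $\|\vec u\|$ is large), so $\widehat F$ is not in $L^1$ of a compact subset and, more importantly, is not continuous, so Birkhoff alone does not immediately control the time average of the \emph{true} counting function along the \emph{specific} orbit of $\Lambda_{A,\ww}$. The fix, which I expect to be the main obstacle to write carefully, is to approximate $E_b$ from inside and outside by nicer regions $E_b^{\pm\varepsilon}$ (e.g.\ truncate the cusp direction, i.e.\ restrict to $\|\vec v\| \ge \delta$, and smooth the boundary) whose Siegel transforms are bounded and continuous, apply Birkhoff to those, and then let the truncation parameter tend to its limit. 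One must check that the contribution of the truncated-away part is negligible after dividing by $\log T$ — this is where one needs either a second-moment/Chebyshev estimate (Siegel–Rogers type) to bound the count of lattice points with very small $\|\vec v\|$, or the Borel–Cantelli argument that is already implicit in the divergence case of Schmidt's theorem. Concretely, lattice points with $\|\vec v\|$ small and $\|\vec u\|\le b\|\vec v\|^{-n/m}$ correspond to rational approximations of unusually high quality, which occur with controlled (summable, after the right normalization) frequency along a generic orbit.

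Finally, I would assemble the pieces: the upper bound uses $E_b^{+\varepsilon} \supset E_b$ and Birkhoff to get $\limsup N(A,\ww,b,T)/\log T \le \vol(E_b^{+\varepsilon})$, then $\varepsilon\to 0$; the lower bound uses $E_b^{-\varepsilon}\subset E_b$ symmetrically, together with the remark that the excised cuspidal piece contributes $o(\log T)$ for a.e.\ $(A,\ww)$. Since $\vol(E_b^{\pm\varepsilon}) \to b^m B_m C_n$ as $\varepsilon\to 0$ (a direct volume computation: the $\vec u$-integral over a ball of radius $b\|\vec v\|^{-n/m}$ gives $b^m B_m \|\vec v\|^{-n}$, and integrating $\|\vec v\|^{-n}$ over the unit-scale annulus in $\R^n$ gives $C_n$, with the $\log T$ arising from the $t$-range), the limit follows. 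The inhomogeneous case differs from the homogeneous one only in replacing $\SL(m+n,\R)/\SL(m+n,\Z)$ by the space of affine lattices and the Siegel mean value theorem by its affine analog; the ergodicity input and the approximation scheme are formally identical, so once Theorem~\ref{ThmLinear} is in place, Theorem~\ref{ThmInhomLinear} requires only checking that each ingredient has its affine counterpart. I expect the write-up to share essentially all of its machinery with the proof of Theorem~\ref{ThmLinear}, differing only in this bookkeeping.
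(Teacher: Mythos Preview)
Your overall architecture (associate to $(A,\ww)$ the affine lattice $\Lambda_{A,\ww}$, express $N(A,\ww,b,T)$ as a Birkhoff sum of a Siegel transform, apply Birkhoff plus the affine Siegel formula) matches the paper. However, you have misidentified the ``delicate point,'' and this is a genuine gap.

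The region you call $E_b$ is \emph{not} the obstacle. In the paper's discretized setup one takes $f_b$ to be the indicator of $\mathcal{R}_b = \{(\xx,\yy): \|\xx\|^m\|\yy\|^n \le b,\ 1 \le \|\yy\| < 2\}$, which is \emph{bounded} (the constraint $\|\yy\|\ge 1$ forces $\|\xx\|^m \le b$), so $\tilde f_b \in L^1(Y_d)$ immediately by the affine Siegel formula, and Birkhoff applies with no truncation of the region needed. Your proposed cuspidal-truncation/second-moment scheme is addressing a nonexistent problem.

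The actual difficulty, and the content of the ``approximation argument,'' is this: the map $(A,\ww)\mapsto \Lambda_{A,\ww}$ parametrizes a \emph{measure-zero submanifold} $\W_{\mathrm{aff}}\subset Y_d$ (its dimension is $mn+m$, strictly less than $\dim Y_d$). The Birkhoff theorem gives convergence for Haar-a.e.\ point of $Y_d$, which says \emph{nothing} about Lebesgue-a.e.\ $(A,\ww)$. Your sentence ``apply the Birkhoff ergodic theorem: for a.e.\ $(A,\ww)$ the time average converges'' is precisely the unjustified step.

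The paper bridges this gap as follows. One parametrizes a full-measure open set of $Y_d$ as a product of $\W_{\mathrm{aff}}$ with a transversal group $\H\ltimes(\{0\}\times\R^n)$. By Fubini, for a.e.\ transversal parameter $M$, Lebesgue-a.e.\ $h_A\Gamma+(\ww,0)$ has the property that $M\cdot(h_A\Gamma+(\ww,0))$ is Birkhoff generic. One then chooses a sequence $M_\ell\to I_d$ of such transversal elements and shows, by direct estimates on how $M_\ell$ distorts the thinning region $\widetilde{\mathcal R}_{b,2^j}$, that the count $R_{b,2^j}(h_A\Gamma+(\ww,0))$ is sandwiched (up to a bounded precompact correction) between $R_{b-\varepsilon'_\ell,2^j}$ and $R_{b+\varepsilon'_\ell,2^{j+1}}$ evaluated at the Birkhoff-generic lattice $M_\ell\cdot(h_A\Gamma+(\ww,0))$. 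Letting $j\to\infty$ and then $\ell\to\infty$ gives Birkhoff genericity of a.e.\ point of $\W_{\mathrm{aff}}$. So the approximation is of \emph{lattices by nearby Birkhoff-generic lattices}, not of \emph{regions by truncated regions}.
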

%\noindent The theorem is a weak form of Schmidt's theorem in~\cite{SchmidtMetrical2} for the special but important case.

%\noindent The theorem extends the weak form of Schmidt's theorem (namely, asymptotics for counting the number of solutions without the error term) from linear forms to affine forms for the special but important case.

\subsection{Lattices}\label{secLattices}  Dynamics on the space of unimodular lattices and Diophantine approximation are strongly linked (see~\cite{KSS} for an introduction and Section~\ref{secApplications} for an instance of this link).  We have results for lattices which are the analogs of our results for linear and affine forms.  Let $m, n \geq 1$, $d=m+n$, and $\La \subset \R^{d}$ be a unimodular lattice, that is, a discrete subgroup of covolume $1$ (see Section~\ref{sec:lattices}). Viewing \[\R^{d} := \R^m \times \R^n,\] we write elements as $\vv = (\xx, \yy), \xx \in \R^m, \yy \in \R^n$. By an abuse of notation, we think of all of these as \emph{column} vectors.  Given $b>0$, define $$R_{b,T}(\La) : = \# \left \{ (\xx, \yy) \in \La: \|\xx\|^m \|\yy\|^n \leq b, 1 \le \|\yy\| <T \right\}.$$

 \begin{thm}\label{ThmB} 
  For each $b >0$ and a.e. unimodular lattice $\La \subset \R^{d}$ (with respect to the Haar measure on the space of unimodular lattices), we have that $$\lim_{T \rightarrow \infty} \frac{R_{b,T}(\La)}{b B_m C_n \log T} = 1.$$ \end{thm}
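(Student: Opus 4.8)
The plan is to prove Theorem~\ref{ThmB} by identifying the counting function $R_{b,T}(\La)$ with a Birkhoff ergodic average for the diagonal flow on the space $X_d = \SL(d,\R)/\SL(d,\Z)$ of unimodular lattices. Specifically, I would let $g_t = \operatorname{diag}(e^{t/m},\dots,e^{t/m},e^{-t/n},\dots,e^{-t/n})$ (with $m$ copies of $e^{t/m}$ and $n$ copies of $e^{-t/n}$, so that $g_t \in \SL(d,\R)$), and introduce the ``window'' set
\begin{equation*}
 E_b := \left\{ (\xx,\yy) \in \R^m \times \R^n : \|\xx\|^m \|\yy\|^n \leq b,\ e^{-1/n} \leq \|\yy\| < e^{1/n} \right\},
\end{equation*}
a bounded region whose defining inequalities are invariant under $g_t$ in the appropriate scaling sense: a vector $\vv$ with $1 \le \|\yy\| < e^{1/n}$ lies in a fixed dyadic-type annulus, and applying $g_{-s}$ for suitable $s$ moves any vector with $e^{s/n} \le \|\yy\| < e^{(s+1)/n}$ into $E_b$ while preserving the product $\|\xx\|^m\|\yy\|^n$. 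Then $R_{b,T}(\La)$ is, up to a bounded additive error coming from the endpoints, equal to $\sum_{j=0}^{N-1} \widehat{f}_{E_b}(g_j \La)$ where $N \approx \frac{n}{?}\log T$ — I would set the flow time so that $j$ ranges over $\lfloor \log T \rfloor$ or so steps — and $\widehat{f}_S(\La) := \#(\La \cap S)$ is the Siegel transform of the indicator of $S$.

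The key steps, in order, are: (1) verify the exact combinatorial identity relating $R_{b,T}(\La)$ to the finite Birkhoff sum $\sum_{j} \widehat{f}_{E_b}(g_{cj}\La)$ for the right constant $c$, controlling boundary contributions (vectors whose $\yy$-norm is exactly a power of $e^{c/n}$, and the discrepancy between $1 \le \|\yy\| < T$ and the union of the shifted windows); (2) apply the Birkhoff ergodic theorem, using that the $g_t$-action on $(X_d, \text{Haar})$ is ergodic (this is classical — Moore ergodicity), to conclude that for a.e.\ $\La$,
\begin{equation*}
 \lim_{N \to \infty} \frac{1}{N}\sum_{j=0}^{N-1} \widehat{f}_{E_b}(g_{cj}\La) = \int_{X_d} \widehat{f}_{E_b}\, d\mu_{\mathrm{Haar}};
\end{equation*}
(3) evaluate the right-hand side by the Siegel mean value theorem, which gives $\int_{X_d} \widehat{f}_S \, d\mu = \vol(S)$ for any nice bounded $S$, so the limit is $\vol(E_b)$; and (4) compute $\vol(E_b)$ in polar-type coordinates on the $\yy$-factor: integrating over the annulus $e^{-1/n} \le \|\yy\| < e^{1/n}$ and, for each $\yy$, over the ball $\|\xx\| \le (b/\|\yy\|^n)^{1/m}$ of volume $B_m \, b/\|\yy\|^n$, one gets $B_m b \int_{e^{-1/n}}^{e^{1/n}} r^{-n} \cdot C_n r^{n-1}\, dr = B_m C_n b \cdot (2/n)$, and then matching $N \sim \frac{n}{?}\log T$ against the normalization $b B_m C_n \log T$ forces the constants to line up exactly (the factor $2/n$ from the window volume cancels against the factor relating $N$ to $\log T$).

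The main obstacle is that $\widehat{f}_{E_b}$ is \emph{unbounded} on $X_d$ — lattices can have many short vectors — so the plain Birkhoff theorem, which requires an $L^1$ function, must be applied carefully: I would need that $\widehat{f}_{E_b} \in L^1(X_d,\mu)$, which follows from Siegel's formula and the boundedness of $E_b$, but then the a.e.\ convergence statement still requires justifying that the integral is finite and that no issue arises from the integrability being merely $L^1$ and not $L^p$ for $p>1$. A secondary subtlety is passing from convergence of $\frac1N\sum_{j<N}$ along integer times $N$ to the continuous-in-$T$ limit with the precise constant, handling the sandwich between consecutive window indices; since $R_{b,T}$ is monotone in $T$ this is a routine squeezing argument, but one must check the error terms are genuinely $o(\log T)$. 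Finally, deducing Theorems~\ref{ThmA}, \ref{ThmLinear}, and \ref{ThmInhomLinear} from Theorem~\ref{ThmB} requires the approximation argument alluded to in the introduction, pushing Birkhoff genericity from all of $X_d$ down to the submanifold of lattices of the form $\begin{psmallmatrix} I_m & A \\ 0 & I_n\end{psmallmatrix}\Z^d$ — but that is the subject of the later sections and not part of proving Theorem~\ref{ThmB} itself.
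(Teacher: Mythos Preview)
Your proposal is correct and follows essentially the same route as the paper: express $R_{b,T}$ as a Birkhoff sum for the diagonal action on $X_d$, invoke Moore ergodicity and the Birkhoff ergodic theorem (using that Siegel's formula gives $\widehat{f}\in L^1$), evaluate the limit as a Euclidean volume via Siegel, and pass from the geometric subsequence to general $T$ by monotonicity. The only difference is bookkeeping: the paper takes $g_t=\operatorname{diag}(e^{(n/m)t}I_m,\,e^{-t}I_n)$ with window $\{1\le\|\yy\|<2\}$ and step $\log 2$, which yields the exact identity $R_{b,2^k}(\La)=\sum_{i=0}^{k-1}\widehat f_b(g_{\log 2}^i\La)$ with no boundary correction, whereas your symmetric window $[e^{-1/n},e^{1/n})$ with unit time step would double-cover (each annulus is hit twice), so you would need either step size $2$ or the half-width window $[1,e^{1/n})$ to make the combinatorics clean.
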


We also record a statement on affine unimodular lattices: an \textit{affine unimodular lattice} $\La+ \vv$ in $\R^{d}$ is a translate of a unimodular lattice $\La \subset \R^{d}$ by a vector $\vv \in \R^{d}/\La$. We define $R_{b,T}(\La+\vv)$ as above.

\begin{thm}\label{ThmC} 
  For each $b >0$ and a.e. affine unimodular lattice $\La + \vv \subset \R^{d}$ (with respect to the Haar measure on the space of affine unimodular lattices), we have that $$\lim_{T \rightarrow \infty} \frac{R_{b,T}(\La + \vv)}{b B_m C_n \log T} = 1.$$ \end{thm}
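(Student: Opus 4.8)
The plan is to realize the hyperbolic counting function $R_{b,T}$ as a Birkhoff sum for a diagonalizable flow on the space of affine unimodular lattices, and then to use the (affine) Siegel mean value theorem to identify the limit. Write $Y$ for the space of affine unimodular lattices in $\RR^{d}$ and $\mu_Y$ for its Haar probability measure, and let
\[
a_t=\operatorname{diag}\!\left(e^{(n/m)t}I_m,\; e^{-t}I_n\right)\in\SL(d,\RR),
\]
acting on $Y$ by $a_t\cdot(\La+\vv)=a_t\La+a_t\vv$. The point of this choice is that $a_t$ preserves the quantity $(\xx,\yy)\mapsto\|\xx\|^m\|\yy\|^n$ while scaling $\|\yy\|$ by the factor $e^{-t}$ (and $\det a_t=1$). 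Put
\[
\mathcal A_b=\left\{(\xx,\yy)\in\RR^m\times\RR^n:\|\xx\|^m\|\yy\|^n\le b,\ 1\le\|\yy\|<e\right\},
\]
a bounded subset of $\RR^{d}$, and define $F\colon Y\to\Z_{\ge 0}$ by $F(\La+\vv)=\#\bigl((\La+\vv)\cap\mathcal A_b\bigr)$, the affine Siegel transform of $\ind_{\mathcal A_b}$. Since the half-open annuli $\{e^{k}\le\|\yy\|<e^{k+1}\}$, $k=0,1,\dots,N-1$, partition $\{1\le\|\yy\|<e^{N}\}$ exactly, applying $a_k$ to the $k$-th annulus yields the identity
\[
R_{b,e^{N}}(\La+\vv)=\sum_{k=0}^{N-1}F\bigl(a_k(\La+\vv)\bigr).
\]

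First I would compute $\vol(\mathcal A_b)$ by slicing in $\xx$: for fixed $\yy$ with $1\le\|\yy\|<e$ the slice is a Euclidean ball of radius $b^{1/m}\|\yy\|^{-n/m}$, so
\[
\vol(\mathcal A_b)=B_m b\int_{1\le\|\yy\|<e}\|\yy\|^{-n}\,d\yy=B_m b\,C_n\int_1^e\frac{dr}{r}=b\,B_m C_n.
\]
By the Siegel mean value theorem in the affine setting (the affine version of Siegel's summation formula, which requires no primitivity correction), $\int_Y F\,d\mu_Y=\vol(\mathcal A_b)=b\,B_mC_n$; in particular $F\in L^1(Y,\mu_Y)$, even though $F$ is unbounded near the cusp of $Y$.

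The remaining input is dynamical: the flow $\{a_t\}$ on $(Y,\mu_Y)$ is ergodic (indeed mixing), so its time-one map $a_1$ is ergodic. Applying the Birkhoff ergodic theorem to $F\in L^1$ gives, for $\mu_Y$-a.e.\ affine lattice $y$,
\[
\frac1N\sum_{k=0}^{N-1}F(a_k y)\longrightarrow\int_Y F\,d\mu_Y=b\,B_mC_n,
\]
that is, $R_{b,e^{N}}(y)\big/N\to b\,B_mC_n$. To deduce the statement for arbitrary $T$, I would use that $R_{b,T}(y)$ is non-decreasing in $T$: for $e^{N}\le T<e^{N+1}$,
\[
R_{b,e^{N}}(y)\le R_{b,T}(y)\le R_{b,e^{N+1}}(y)=R_{b,e^{N}}(y)+F(a_N y),
\]
and since the partial sums $S_N:=\sum_{k<N}F(a_k y)$ satisfy $S_N/N\to b\,B_mC_n$ we get $F(a_N y)/N=S_{N+1}/N-S_N/N\to 0$; as $\log T\in[N,N+1)$, dividing through by $b\,B_mC_n\log T$ gives $R_{b,T}(y)/(b\,B_mC_n\log T)\to 1$ for $\mu_Y$-a.e.\ $y$, which is the assertion.

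I expect the main obstacle to be justifying the ergodicity used in the third step. The space $Y$ is a homogeneous space of the non-reductive group $\SL(d,\RR)\ltimes\RR^{d}$ rather than of a semisimple group, so ergodicity (and mixing) of the diagonal flow $\{a_t\}$ on $Y$ is not immediate from the Howe--Moore theorem the way it is on the space of non-affine unimodular lattices; one needs the appropriate extension — either a Mautner-type argument using that $a_t$ acts on the $\RR^{d}$-factor with no eigenvalue equal to $1$, or an appeal to the known ergodicity of this flow. A secondary, technical point is that $F$ is unbounded, so one must use the $L^1$ form of the Birkhoff theorem; this is exactly why the finiteness $\vol(\mathcal A_b)<\infty$ from the Siegel mean value computation is needed. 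The telescoping identity, the volume computation, and the interpolation over $T$ are all routine.
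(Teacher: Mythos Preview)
Your proposal is correct and follows essentially the same approach as the paper: express $R_{b,T}$ as a Birkhoff sum for the diagonal flow on $Y_d$, apply the Birkhoff ergodic theorem together with the affine Siegel formula to compute the limiting average as $\vol(\mathcal A_b)=bB_mC_n$, and then interpolate to general $T$ by monotonicity. The only cosmetic differences are that the paper uses base~$2$ rather than base~$e$ for the time discretization, and for the ergodicity obstacle you flag in the affine (non-semisimple) setting the paper simply cites \cite[Lemma~4.2]{Kl} rather than invoking a Mautner-type argument.
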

  
 A direct computation shows that $b B_m C_n \log T$ is the $d$-dimensional volume of the region $$\mathcal R_{b,T} : =\left \{ (\xx, \yy) \in \R^d: \|\xx\|^m \|\yy\|^n \leq b, 1 \le \|\yy\| <T \right\}$$ where we are counting lattice points.

 \subsection{Toral Translations}\label{sec:toral} Our results for forms can also be interpreted in terms of shrinking target properties (or logarithm laws) for toral translations. Fix $m \geq 1$, and $\boldsymbol{\a} \in \T^m := \R^m/\Z^m$. We consider the dynamical system generated by translation by $\boldsymbol{\a}$, that is, the map $T_{\boldsymbol{\a}}: \T^m \rightarrow \T^m$, where $$T_{\boldsymbol{\a}} \xx = \boldsymbol{\a} + \xx.$$

Let $\|\xx\|_{\Z} = \min\{ \|\xx- \boldsymbol{p}\|: \boldsymbol{p} \in \Z^m\}$. For $b>0$, we define 
\begin{equation*}
 S_{b,N} (\boldsymbol{\a}) := \# \left\{1 \leq  q \leq N : \left\| q\boldsymbol{\a} \right\|_{\Z} < b q^{-1/m} \right\} = \# \left\{1 \leq  q \leq N : T_{\boldsymbol{\a}} ^q (\mathbf{0}) \in B(\mathbf{0}, bq^{-1/m}) \right\}  .
\end{equation*}
where $B(\boldsymbol{0}, r)$ denotes a $\|\cdot\|_{\Z}$-ball in $\T^m$ of radius $r>0$. Similarly, given $\vv \in \T^m$, we define \begin{equation*}
 S_{b, N}(\boldsymbol{\a}, \vv) := \# \left\{ 1 \le q \leq N : \left\| q\boldsymbol{\a}-\vv \right\|_\Z < bq^{-1/m} \right\} = \# \left\{1 \leq q \leq N : T_{\boldsymbol{\a}} ^q (\mathbf{0}) \in B(\mathbf{v}, bq^{-1/m}) \right\} . 
\end{equation*}  
As above, let  $B_{m}$ be the volume of the unit $\|\cdot\|$-ball in $\R^m$.

\begin{cor}\label{theoPowerLogLaw} 
 For each $b >0$ and a.e. $\boldsymbol{\a}$, we have that 
\begin{equation*}
 \lim_{N \to \infty} \frac{S_{b, N} (\boldsymbol{\a})}{b^m B_m \log N}  = 1
\end{equation*} and, for each $b >0$, a.e. $\boldsymbol{\a}$, and a.e. $\vv \in \T^m$, we have that 
\begin{equation*}
 \lim_{N \to \infty} \frac{S_{b, N}(\boldsymbol{\a}, \vv) }{b^m B_m\log N}= 1.
\end{equation*}
\end{cor}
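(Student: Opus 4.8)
The plan is to deduce the corollary from Theorem~\ref{ThmB} and Theorem~\ref{ThmC} with $m$ fixed and $n=1$, so that $d=m+1$. First I would set up the standard correspondence between the orbit of a toral translation and lattice points: given $\boldsymbol{\a}\in\T^m$, consider the unimodular lattice $\La_{\boldsymbol{\a}}\subset\R^{m+1}$ generated by the rows of the lower-triangular unipotent matrix with $\boldsymbol{\a}$ in the last column, i.e.\ $\La_{\boldsymbol{\a}}=\{(\boldsymbol{p}-n\boldsymbol{\a},n): \boldsymbol{p}\in\Z^m,\ n\in\Z\}$. A point $(\xx,y)\in\La_{\boldsymbol{\a}}$ thus has $y=n\in\Z$ and $\xx=\boldsymbol{p}-n\boldsymbol{\a}$, so the condition $1\le|y|<N$ picks out $1\le|n|<N$ and $\|\xx\|^m|y|^n=\|\boldsymbol{p}-n\boldsymbol{\a}\|^m\,|n|\le b$ becomes $\|n\boldsymbol{\a}-\boldsymbol{p}\|\le (b/|n|)^{1/m}$. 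Restricting to $n\ge1$ and minimizing over $\boldsymbol{p}\in\Z^m$ shows that $R_{b,N}(\La_{\boldsymbol{\a}})$ counts exactly $S_{b^{1/m}\cdot\text{(const)},N}(\boldsymbol{\a})$ up to the harmless two-sided/one-sided factor and a rescaling of $b$; precisely, taking the parameter $b^m$ in Theorem~\ref{ThmB} in place of $b$ makes the hyperbolic condition $\|n\boldsymbol{\a}\|_{\Z}<b n^{-1/m}$ match. (One should note the count $n=0$ contributes $O(1)$ and that the negative-$n$ terms double the count, matching the factor $C_1=2$ in $bB_mC_n=b^mB_m\cdot 2$ after the reparametrization; I would track these constants carefully so the normalizing constant in the corollary comes out to exactly $b^mB_m\log N$.)

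Next I would invoke the key measure-theoretic input: the map $\boldsymbol{\a}\mapsto\La_{\boldsymbol{\a}}$ pushes Lebesgue measure on $\T^m$ to the Haar measure on the corresponding horospherical (or rather unipotent-translate) submanifold of the space $X_{m+1}$ of unimodular lattices, and the abstract approximation argument underlying Theorems~\ref{ThmB}--\ref{ThmC} (the same one the introduction advertises as deducing Birkhoff genericity of a.e.\ lattice in a submanifold from that in the whole space) guarantees that for a.e.\ $\boldsymbol{\a}$ the lattice $\La_{\boldsymbol{\a}}$ lies in the full-measure set on which the conclusion of Theorem~\ref{ThmB} holds. Combining this with the counting identity from the previous paragraph gives $\lim_{N\to\infty} S_{b,N}(\boldsymbol{\a})/(b^mB_m\log N)=1$ for a.e.\ $\boldsymbol{\a}$. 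For the inhomogeneous statement, the same construction with the lattice replaced by the affine lattice $\La_{\boldsymbol{\a}}+(\vv,0)$, together with Theorem~\ref{ThmC} and the product structure of Lebesgue measure on $\T^m\times\T^m$, yields $\lim_{N\to\infty}S_{b,N}(\boldsymbol{\a},\vv)/(b^mB_m\log N)=1$ for a.e.\ $(\boldsymbol{\a},\vv)$; here one uses a Fubini argument to pass from ``a.e.\ $(\boldsymbol{\a},\vv)$'' to ``a.e.\ $\boldsymbol{\a}$ and a.e.\ $\vv$''.

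The main obstacle is not the dynamics — that is entirely outsourced to Theorems~\ref{ThmB} and~\ref{ThmC} — but rather verifying that the submanifold $\{\La_{\boldsymbol{\a}}:\boldsymbol{\a}\in\T^m\}$ (resp.\ its affine analogue) is genuinely one of the submanifolds to which the approximation argument of those theorems applies, i.e.\ that it is transverse to the relevant flow direction and its natural measure is the pushforward of Lebesgue measure, so that ``full measure in $X_{m+1}$'' descends to ``full measure in $\T^m$.'' Once that is granted, the only remaining care is bookkeeping: matching the region $\cR_{b,N}$ defining $R_{b,N}(\La)$ to the target balls $B(\mathbf{0},bn^{-1/m})$ under the reparametrization $b\mapsto b^m$, and checking that the volume constant $bB_mC_n$ from Theorem~\ref{ThmB} specializes (with $n=1$, $C_1=2$, and $b\mapsto b^m$) to exactly $b^mB_m$ after we have folded the factor of $2$ into the passage from $\La_{\boldsymbol{\a}}$-points with $y\in\Z\setminus\{0\}$ to the one-sided count $1\le n\le N$.
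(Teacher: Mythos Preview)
Your proposal is correct in substance and is the same underlying argument as the paper's, but you are re-deriving an intermediate result that the paper already packages as a theorem. The paper's proof is one line: the first limit follows from Theorem~\ref{ThmA} (equivalently, Theorem~\ref{ThmLinear} with the column vector $A=\boldsymbol{\a}\in M_{m\times 1}$, so $n=1$) and the second from Theorem~\ref{ThmInhomLinear}; the factor $C_1=2$ in $b^m B_m C_1$ is absorbed exactly as you say, by passing from the two-sided count over $1\le|q|<T$ to the one-sided count $1\le n\le N$. Your plan instead reaches back to Theorems~\ref{ThmB} and~\ref{ThmC} and then invokes ``the approximation argument underlying Theorems~\ref{ThmB}--\ref{ThmC}'' to descend to the submanifold $\{\La_{\boldsymbol{\a}}\}$. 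That is a misattribution: Theorems~\ref{ThmB} and~\ref{ThmC} are pure Birkhoff plus Siegel on the full spaces $X_d$, $Y_d$, with no approximation step; the descent to the horospherical submanifold is precisely the content of Section~\ref{subsubsecPTLinearTA} (Lemma~\ref{lemmParaOfG}, Proposition~\ref{propFullMeasureBirkGen}, and the sandwiching with $\widehat{f}_{b\pm\varepsilon_\ell'}$), i.e., the proofs of Theorems~\ref{ThmLinear} and~\ref{ThmInhomLinear}. So the ``main obstacle'' you flag---checking that $\{\La_{\boldsymbol{\a}}\}$ is a submanifold to which the approximation argument applies---is exactly what those theorems already establish, and citing them directly collapses your argument to the paper's one-line proof.
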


\begin{proof}
The first equality follows by setting $n=1$ in Theorem~\ref{ThmLinear} and counting over $1 \leq  q \leq N$ instead of over $1 \leq  |q| \leq N$ (and, thus, counting exactly half as many).  Likewise, the second equality follows by setting $n=1$ in Theorem~\ref{ThmInhomLinear} and counting over $1 \leq  q \leq N$ instead of over $1 \leq  |q| \leq N$. 
\end{proof}

Corollary~\ref{theoPowerLogLaw} can be regarded as a strengthening of the logarithm law for toral translations (see~\cite[Eqns.~(5.1)~and~(5.2)]{TsengSTP}).  For $m=1$, there is, in addition, a logarithm law for circle rotations for any irrational $\alpha$~\cite[Corollary~1.7]{TsengSTP}, which follows from a certain shrinking target property (see~\cite[Section~1.3]{TsengSTP} for more details).

\subsection{Badly approximable forms and bounded geodesics} Finally, we note that our results for forms, lattices, toral translations, and translation surfaces for the stratum of the flat torus cannot be improved from \textit{almost every} to \textit{every} because of the existence of badly approximable systems of affine forms~\cite[Theorems~1.1 and~1.4]{ET} (see also~\cite{Kl}~and~\cite{T2}).  For general strata, the existence of bounded geodesics~\cite[Theorem~1.2]{KW} (see also~\cite[Theorem~1.3]{CCM}) shows such improvement cannot occur.

\subsection{Strategy of Proof}\label{sec:strategy} There is a common strategy of proof for Theorems~\ref{trans}, \ref{ThmB}, and \ref{ThmC}. Namely, we express the quantities $R_T$ as Birkhoff averages of an appropriate ergodic transformation on a moduli space and apply the Birkhoff ergodic theorem. The limiting integrals on moduli space can be computed by an application of a Siegel (or Siegel-Veech) formula, allowing us to reduce the problem to a volume computation on Euclidean space.  These ideas, along with an appropriate parametrization of $\SL(d, \RR)$ and an approximation argument which allows us to reduce results about unimodular lattices to number theory, appear again in Theorems~\ref{ThmLinear} and~\ref{ThmInhomLinear}.

 \section{The space of unimodular lattices}\label{sec:lattices} Given a unimodular lattice $\La \subset \R^{d}$, we can write $\La = g \Z^{d}$, where $g \in \SL(d, \R)$ is well-defined up to multiplication on the right by elements of $\SL(d, \Z)$. That is, we can identify the space of unimodular lattices with the homogeneous space $X_{d} := \SL(d, \R)/\SL(d, \Z)$.  It is well-known that $\SL(d,\ZZ)$ is a lattice in the unimodular group $\SL(d, \R)$, and, consequently, the measure, with respect to a Haar measure, of any fundamental domain is the same positive finite value.  Since a Haar measure is unique up to a scalar, we may choose the scalar so that the resulting Haar measure of any fundamental domain is unit.  We will use the notation $\mu = \mu_{d}$ for this Haar measure on $\SL(d,\R)$ and for its induced measure on $X_d$.  For clarity, note that we have $\mu_d(X_d)=1$.
 
 \subsection{Mean value formulas}\label{sec:mean} A key ingredient of our proof is the computation of the average number (with respect to $\mu_{d}$) of lattice points in a given subset of $\R^{d}$. This is known as the Siegel mean value theorem, a central result in the geometry of numbers:

\begin{thm}[Siegel's formula, \cite{Siegel, Macbeath}]\label{Siegel}
Let $f \in L^1(\R^{d}, \lambda)$ where $\l$ is the Lebesgue measure on $\R^d$.  Define a function $\widehat{f}$ on $X_{d}$ by
\begin{equation*}
 \widehat{f}(\La) := \sum_{\xx \in \La \setminus \{0\}} f(\xx).
\end{equation*}
Then 
\begin{equation}\label{eqnSiegelForm}
 \int_{X_{d}} \widehat{f} \,\, d\mu = \int_{\R^{d}} f \,\, d\l.
\end{equation}
\end{thm}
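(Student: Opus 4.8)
The plan is the classical route of Siegel and Weil: show by an invariance argument that $f\mapsto\int_{X_d}\widehat f\,d\mu$ equals $c\int_{\RR^d}f\,d\lambda$ for some constant $c>0$, and then evaluate $c=1$. By standard limiting arguments it suffices to prove the identity for $f\in C_c(\RR^d\setminus\{0\})$ with $f\ge 0$: since $\widehat f$ depends only on $\La\setminus\{0\}$, approximating a general nonnegative $f$ monotonically from below by such functions and applying monotone convergence on $X_d$ and on $\RR^d$ gives the identity for all nonnegative measurable $f$, after which splitting into parts and invoking dominated convergence handles arbitrary $f\in L^1(\RR^d,\lambda)$; the one point needing care is that $\widehat f$ is sensitive to the $\lambda$-null set of lattice points, but once the identity is known for $C_c$ functions one sees that a $\lambda$-null open set meets $\mu$-a.e.\ lattice trivially, so modifying $f$ on a null set is harmless. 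For $f\in C_c(\RR^d\setminus\{0\})$ one must also know $\widehat f\in L^1(X_d)$, i.e.\ that $\La\mapsto\#(\La\cap B(0,R))$ is $\mu$-integrable; this I would get from reduction theory, bounding the count by a product in the successive minima, using $\lambda_1(\La)\cdots\lambda_d(\La)\asymp_d 1$ together with the elementary tail bound $\mu(\{\La:\lambda_1(\La)<\varepsilon\})\ll_d\varepsilon^d$. Then $L(f):=\int_{X_d}\widehat f\,d\mu$ is a finite, nonnegative, $\SL(d,\RR)$-invariant linear functional on $C_c(\RR^d\setminus\{0\})$ — invariant because $\widehat{f\circ g^{-1}}(\La)=\widehat f(g^{-1}\La)$ and $\mu$ is $\SL(d,\RR)$-invariant — while $\int_{\RR^d}f\circ g^{-1}\,d\lambda=\int_{\RR^d}f\,d\lambda$ since $|\det g|=1$. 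By Riesz representation $L$ is integration against a Radon measure on $\RR^d\setminus\{0\}$, which is $\SL(d,\RR)$-invariant; since $\SL(d,\RR)$ acts transitively on $\RR^d\setminus\{0\}$ (here $d\ge 2$), such a measure is unique up to scaling, whence $\int_{X_d}\widehat f\,d\mu=c\int_{\RR^d}f\,d\lambda$.

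It remains to show $c=1$, which is the substantive step. Running the same argument with the primitive-vector counting function $\widehat{f}^{\,\mathrm{prim}}(\La):=\sum_{\xx\in\La_{\mathrm{prim}}}f(\xx)$ gives $\int_{X_d}\widehat{f}^{\,\mathrm{prim}}\,d\mu=c'\int_{\RR^d}f\,d\lambda$, and since each nonzero vector of a lattice is uniquely a positive-integer multiple of a primitive one, $\widehat f(\La)=\sum_{k\ge1}\widehat{f_k}^{\,\mathrm{prim}}(\La)$ with $f_k(\xx):=f(k\xx)$, so $c=c'\sum_{k\ge1}k^{-d}=c'\zeta(d)$. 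To compute $c'$, use that the primitive vectors of $\ZZ^d$ form the single $\SL(d,\ZZ)$-orbit of $e_1$ with stabilizer $\Gamma_1=\mathrm{Stab}_{\SL(d,\ZZ)}(e_1)$, so $\widehat{f}^{\,\mathrm{prim}}(g\ZZ^d)=\sum_{\gamma\in\SL(d,\ZZ)/\Gamma_1}f(g\gamma e_1)$; integrating over a fundamental domain for $\SL(d,\ZZ)$ unfolds this to an integral over $\SL(d,\RR)/\Gamma_1$, and disintegrating Haar measure along the fibration $\SL(d,\RR)\to\RR^d\setminus\{0\}$, $h\mapsto he_1$, with fiber $H_1=\mathrm{Stab}_{\SL(d,\RR)}(e_1)\cong\RR^{d-1}\rtimes\SL(d-1,\RR)$ containing $\Gamma_1\cong\ZZ^{d-1}\rtimes\SL(d-1,\ZZ)$, expresses $c'$ through the covolume of $\Gamma_1$ in $H_1$. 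An induction on $d$, with all Haar normalizations fixed by $\mu_d(X_d)=1$ and the base case $d=2$ checked directly, then yields $c'=\zeta(d)^{-1}$, hence $c=1$.

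The soft parts are routine; the real obstacle is the evaluation of the constant, where one must track the normalizations of the several Haar measures through the unfolding and the induction on $d$ (alternatively one can carry out Siegel's original computation with an explicit test function and an explicit fundamental domain, but that is of comparable length). The integrability input $\widehat f\in L^1(X_d)$ is also a genuine, if standard, point from reduction theory rather than something formal.
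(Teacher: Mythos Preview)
The paper does not prove this theorem; it simply quotes it with a citation to Siegel and Macbeath--Rogers and uses it as a black box. So there is no ``paper's own proof'' to compare against, and your task was only to supply a correct argument.

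Your outline is the classical Siegel--Weil route and is correct in substance. The invariance argument giving $\int_{X_d}\widehat f\,d\mu=c\int_{\RR^d}f\,d\lambda$ is exactly right: transitivity of $\SL(d,\RR)$ on $\RR^d\setminus\{0\}$ (for $d\ge 2$) forces uniqueness of the invariant Radon measure up to scalar. The evaluation of $c$ via unfolding over primitive vectors and induction on $d$ is also the standard line, and you correctly flag the bookkeeping of Haar normalizations as the real work. Two small remarks: the phrase ``a $\lambda$-null open set'' is a slip (a nonempty open set has positive Lebesgue measure); what you need is that for any $\lambda$-null Borel set $N$, outer regularity gives open $U\supset N$ with $\lambda(U)$ small, and monotone convergence from the $C_c$ case shows $\int_{X_d}\widehat{\mathbf{1}_U}\,d\mu=c\lambda(U)$, forcing $\widehat{\mathbf{1}_N}=0$ $\mu$-a.e. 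Second, your integrability sketch is a little rough---the clean bound is $\#(\La\cap B(0,R))\ll_{d,R}\prod_{\lambda_i(\La)\le R}(R/\lambda_i(\La))$ combined with the tail estimates for the successive minima from reduction theory---but the conclusion is standard and your pointer to it is adequate. Overall the proposal is sound; there is no gap.
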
  
\noindent Note that, if two functions differ in value on a null set, (\ref{eqnSiegelForm}) still holds, so there is no need to distinguish between functions that differ on null sets.  

The space of affine unimodular lattices $Y_{d}$ can be identified with the space \[\SL(d, \R) \ltimes \R^{d} \big{/}\SL(d, \Z) \ltimes \Z^{d}.\]  In other words, it is a fiber bundle over $X_{d}$ with (compact) fiber over a unimodular lattice $\La$ given by the torus $\R^{d}/\La$. It has a natural probability measure $\nu= \nu_{d}$. As above, given $f \in L^1(\R^{d}, \lambda)$, define a function $\tilde{f}$ on $Y_{d}$ by \begin{equation*}
 \tilde{f}(\La + \vv) := \sum_{\xx \in \La +\vv} f(\xx).
\end{equation*}

As above, we have \begin{equation*}
 \int_{Y_{d}} \tilde{f} \,\, d\nu = \int_{\R^{d}} f \,\, d\l,
\end{equation*} which is Siegel's formula for affine unimodular lattices; for a proof, see~\cite[Corollary~5.2]{MS3}.

\subsubsection{Primitive Vectors} There is also a version of Siegel's formula for the transform associated to summing over the set of \emph{primitive} vectors (vectors which are not non-trivial multiples of other vectors in the lattice or, equivalently, are visible from the origin), which has an extra factor $\frac{1}{\zeta(d)}$ on the right hand side (this is the proportion of primitive vectors), and we can obtain analogous results to Theorem~\ref{ThmB} for primitive vectors with a corresponding factor of $1/\zeta(d)$.  

\subsection{Diagonal Flows}\label{subsecDiagFlows}  

Given $m, n 
\geq 1$, let 
\begin{equation*}
 g_t := \left( \begin{array}{cc}
 e^{\frac n m t} I_m& 0\\
  0  & e^{-t} I_n\\
\end{array} 
\right),
\end{equation*} where $I_m$ and $I_n$ denote the $m \times m$ and $n\times n$ identity matrices, respectively.

The one-parameter group $\{g_t\}_{t \in \R}$ acts on the spaces $X_{d}$ and $Y_{d}$ by left multiplication (equivalently, by the linear action on the lattices viewed as subsets of $\R^{d}$). Given $b >0$, let $f_{b}$ be the indicator function of the set  $$\mathcal R_b := \left \{ (\xx, \yy) \in \R^{d}: \|\xx\|^m \|\yy\|^n \leq b, 1 \le \|\yy\| < 2 \right\}.$$ Note that $$g_{-\log 2} \mathcal R_b = \left \{ (\xx, \yy) \in \R^{d}: \|\xx\|^m \|\yy\|^{n} \leq b, 2 \le \|\yy\| < 4 \right\};$$ again, we are thinking of these elements of $\R^{d}$ as column vectors.  Let $\La$ be a unimodular lattice and $\vv$ be an element in $\R^d$.  The key observation in the proof of Theorem~\ref{ThmB} is

\begin{equation}\label{eq:birkhoff} R_{b,2^k}(\La) = \sum_{i=0}^{k-1} \widehat{f}_b (g_{\log 2}^i \La)\end{equation}
and in Theorem~\ref{ThmC} is 
\begin{equation}\label{eq:birkhoff:affine}R_{b,2^k}(\La + \vv) = \sum_{i=0}^{k-1} \tilde{f}_b (g_{\log 2}^i (\La + \vv)).\end{equation}

\subsection{Ergodicity} The key fact that we use follows from the Moore ergodicity theorem and its generalization to the space of affine unimodular lattices:

\begin{theorem*}[\cite{Moore}] The action of $\{g_t\}$ on $X_d$ (and $Y_d$) is ergodic with respect to the Haar measure. In particular, the transformation $g_{\log 2}$ is ergodic.

\end{theorem*}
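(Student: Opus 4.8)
The plan is to prove the slightly stronger statement that the single transformation $g_{\log 2}$ (indeed every nontrivial $g_t$) is ergodic, via the Mautner phenomenon. Ergodicity of $g_{\log 2}$ on $X_d$ amounts to saying that the Koopman operator $\phi \mapsto \phi \circ g_{\log 2}$ has no nonzero fixed vectors on $L^2_0(X_d) := \{\phi \in L^2(X_d,\mu) : \int \phi \, d\mu = 0\}$. So I would pass to the unitary representation $\pi$ of $\SL(d,\R)$ on $L^2_0(X_d)$ and fix a vector $\phi$ with $\pi(g_{\log 2})\phi = \phi$; the goal is then to show $\phi = 0$.

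The engine is the \emph{Mautner phenomenon}: if $\pi$ is a unitary representation of a topological group, $a, u$ are elements with $a^{-k} u a^{k} \to e$ as $k \to \infty$, and $\pi(a)\phi = \phi$, then $\pi(u)\phi = \phi$ --- a three-line computation from $\pi(u)\phi = \pi(a^k)\,\pi(a^{-k}ua^k)\,\phi$, unitarity of $\pi(a^k)$, and strong continuity of $\pi$. I would apply this with $a = g_{\log 2}$ to the two opposite horospherical subgroups
\begin{equation*}
U^{+} = \left\{ \begin{pmatrix} I_m & X \\ 0 & I_n \end{pmatrix} : X \in M_{m\times n}(\R) \right\}, \qquad U^{-} = \left\{ \begin{pmatrix} I_m & 0 \\ Y & I_n \end{pmatrix} : Y \in M_{n\times m}(\R) \right\},
\end{equation*}
since conjugation by $g_{-t}$ scales the off-diagonal block of an element of $U^{+}$ by $e^{-(1+n/m)t} \to 0$ as $t \to +\infty$, and symmetrically for $U^{-}$ as $t \to -\infty$ (using that $\phi$ is also fixed by $g_{-\log 2}$). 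Hence $\phi$ is fixed by both $U^{+}$ and $U^{-}$. These two subgroups generate $\SL(d,\R)$: the Lie algebra they generate contains the off-diagonal blocks $\mathfrak n^{\pm}$, and $[\mathfrak n^{+}, \mathfrak n^{-}]$ already spans the trace-zero block-diagonal subalgebra (here $m, n \ge 1$ is used), so by a dimension count the generated subalgebra is all of $\mathfrak{sl}_d$. Therefore $\phi$ is $\SL(d,\R)$-invariant, and since $\SL(d,\R)$ acts transitively on $X_d$ preserving $\mu$, a Fubini argument forces $\phi$ to be a.e.\ constant, hence $\phi = 0$. This proves ergodicity on $X_d$.

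For $Y_d$ I would run the same argument inside $G' := \SL(d,\R) \ltimes \R^d$ with lattice $\SL(d,\Z) \ltimes \Z^d$. The only new feature is that $g_t$ also acts on the translation directions: conjugating the translation $\tau_w$ by $g_{-t}$ gives $\tau_{g_{-t}w}$, so $\R^m \times \{0\}$ lies in the expanding horospherical subgroup of $g_t$ and $\{0\} \times \R^n$ in the contracting one. Thus by Mautner a $g_{\log 2}$-fixed $\phi \in L^2_0(Y_d)$ is invariant under $U^{+}$, $U^{-}$, and all of $\R^d$, hence under $G' = \langle U^{+}, U^{-}, \R^d \rangle$, which acts transitively on $Y_d$; the same Fubini argument gives $\phi = 0$. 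The one place that needs care is exactly this passage to $Y_d$: $G'$ is not semisimple, so one cannot invoke a structure theorem about unbounded subgroups of simple Lie groups, and one must see directly that the translation directions are absorbed into the expanding and contracting horospherical subgroups of $g_t$ in $G'$ --- which the horospherical form of Mautner handles cleanly. (Alternatively one could decompose $L^2(Y_d)$ as a direct integral over the dual of the $\R^d$-action, treating the trivial character by the $X_d$ result and the nontrivial characters separately, but that is more technical.) A secondary routine point is the Fubini argument that a measurable function invariant under a transitive, measure-preserving group action is a.e.\ constant.
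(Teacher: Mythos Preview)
Your argument via the Mautner phenomenon is correct and is essentially the standard modern proof of Moore ergodicity for this particular one-parameter subgroup. The computation that $[\mathfrak n^{+},\mathfrak n^{-}]$ together with $\mathfrak n^{\pm}$ spans $\mathfrak{sl}_d$ goes through as you indicate (one checks on elementary matrices that the brackets produce all off-diagonal entries of each diagonal block and enough of the diagonal), and the passage from ``the stabilizer of $\phi$ contains $U^{\pm}$'' to ``the stabilizer is all of $\SL(d,\R)$'' uses only that the stabilizer is closed and $\SL(d,\R)$ is connected. Your treatment of the semidirect-product case for $Y_d$ is also correct: the crucial point, which you identify, is that the translation directions split as $\R^m\times\{0\}$ and $\{0\}\times\R^n$ into the expanding and contracting horospherical subgroups of $g_t$ inside $\SL(d,\R)\ltimes\R^d$, so Mautner applies directly without any appeal to semisimplicity.

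There is nothing to compare against, however: the paper does not prove this statement. It is quoted as a black box, with a citation to Moore for $X_d$ and to \cite[Lemma~4.2]{Kl} for the affine space $Y_d$. So your proposal is not an alternative to the paper's proof but rather a self-contained substitute for the cited references. What this buys is that the paper could in principle be made independent of those citations; what it costs is a page or so of standard representation-theoretic argument that the authors evidently preferred to outsource.
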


\noindent For a proof in the case of $Y_d$, see~\cite[Lemma~4.2]{Kl}.  Note that, by Siegel's formula above, $\widehat{f} \in L^1(X_d, \mu)$ and $\tilde{f} \in L^1(Y_d, \nu)$, which allow us to apply the Birkhoff Ergodic Theorem (see, for example, Walters~\cite{Walters}):

\begin{theorem*}[Birkhoff ergodic theorem] Let $T$ be an ergodic measure-preserving transformation of a probability space$(X, \mu)$, and let $f \in L^1(X, \mu)$. Then, for almost every $x \in X$, we have that $$\lim_{N \rightarrow \infty} \frac 1 N \sum_{i=0}^{N-1} f(T^i x) = \int_{X} f d\mu.$$
\end{theorem*}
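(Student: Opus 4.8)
The plan is to give the classical argument via the Maximal Ergodic Theorem, following the short approach of Garsia. Let $T$ be an ergodic measure-preserving transformation of $(X,\mu)$ and $f\in L^1(X,\mu)$. Write $S_N f := \sum_{i=0}^{N-1} f\circ T^i$ and $f^* := \sup_{N\ge 1} \tfrac{1}{N} S_N f$, $f_* := \liminf_{N} \tfrac{1}{N}S_N f$, $\overline{f} := \limsup_N \tfrac1N S_N f$. The key lemma is the Maximal Ergodic Theorem: if $g\in L^1$ and $E := \{x : \sup_{N\ge1} S_N g(x) > 0\}$, then $\int_E g\,d\mu \ge 0$. To prove this, set $G_N := \max_{1\le k\le N} S_k g$ (with $G_0 := 0$) and $G_N^+ := \max(G_N,0)$; one checks the pointwise inequality $g \ge G_N - G_{N-1}\circ T$ on the set $\{G_N>0\}$, hence $g \ge G_N^+ - G_{N-1}^+\circ T$ everywhere on $\{G_N>0\}$ and trivially $g\ge -G_{N-1}^+\circ T$ off it (since $G_{N-1}^+\ge0$); integrating over $\{G_N>0\}$ and using $T$-invariance of $\mu$ gives $\int_{\{G_N>0\}} g\,d\mu \ge \int G_N^+ - \int G_{N-1}^+ \ge 0$, since $G_N^+\ge G_{N-1}^+$. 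Letting $N\to\infty$, the sets $\{G_N>0\}$ increase to $E$, and dominated convergence (dominated by $|g|$) yields $\int_E g\,d\mu\ge0$.

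Next I would deduce the almost-everywhere convergence. Fix rationals $\alpha<\beta$ and let $E_{\alpha,\beta} := \{x : f_*(x) < \alpha < \beta < \overline{f}(x)\}$. Both $f_*$ and $\overline{f}$ are $T$-invariant (since $\tfrac1N S_N f - \tfrac1N S_N(f\circ T) \to 0$ pointwise), so $E_{\alpha,\beta}$ is $T$-invariant, and I apply the Maximal Ergodic Theorem to the function $g := (f-\beta)\ind_{E_{\alpha,\beta}}$ restricted to the invariant set $E_{\alpha,\beta}$ (equivalently, run the argument inside the system $T|_{E_{\alpha,\beta}}$). On $E_{\alpha,\beta}$ the set where $\sup_N S_N(f-\beta) > 0$ is all of $E_{\alpha,\beta}$, since $\overline f > \beta$ there; hence $\int_{E_{\alpha,\beta}} (f-\beta)\,d\mu \ge 0$, i.e. $\int_{E_{\alpha,\beta}} f\,d\mu \ge \beta\,\mu(E_{\alpha,\beta})$. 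Applying the same reasoning to $-f$ and $-\alpha$ (the set where $\overline{-f} > -\alpha$ contains $E_{\alpha,\beta}$ since $f_* < \alpha$ there) gives $\int_{E_{\alpha,\beta}} f\,d\mu \le \alpha\,\mu(E_{\alpha,\beta})$. Since $\alpha < \beta$, this forces $\mu(E_{\alpha,\beta}) = 0$. Taking the countable union over all rational pairs $\alpha<\beta$ shows $f_* = \overline f$ almost everywhere, so $\lim_N \tfrac1N S_N f$ exists a.e.; call the limit $\widetilde f$, which is $T$-invariant, hence constant a.e. by ergodicity.

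Finally I would identify the constant as $\int_X f\,d\mu$. For $f\in L^\infty$ this is immediate from the bounded convergence theorem: $\int \widetilde f\,d\mu = \lim_N \tfrac1N \int S_N f\,d\mu = \int f\,d\mu$, and since $\widetilde f$ is a.e. constant, $\widetilde f = \int f\,d\mu$ a.e. For general $f\in L^1$, approximate: given $\varepsilon>0$ pick bounded $h$ with $\|f-h\|_1 < \varepsilon$; one needs $\tfrac1N S_N(f-h)\to 0$ in $L^1$ or in measure, which follows since the ergodic averages are contractions on $L^1$ (so $\|\tfrac1N S_N(f-h)\|_1 \le \|f-h\|_1 < \varepsilon$ for all $N$) combined with the already-established a.e. convergence of $\tfrac1N S_N(f-h)$ to its (constant) limit, whose absolute value is then $\le\varepsilon$; letting $\varepsilon\to0$ pins the limit for $f$ at $\int f\,d\mu$. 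The main obstacle in this argument is the Maximal Ergodic Theorem — specifically verifying the pointwise inequality $g\ge G_N - G_{N-1}\circ T$ on $\{G_N>0\}$ carefully, since everything else is bookkeeping with invariant sets and a routine $L^1$-approximation; once that combinatorial inequality is in hand the rest of the proof is essentially forced.
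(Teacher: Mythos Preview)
Your argument is the standard Garsia-style proof of the Birkhoff ergodic theorem and is essentially correct. A couple of small points: the key identity in the Maximal Ergodic Theorem is in fact an equality $g = G_N - G_{N-1}^+\circ T$ everywhere (not just an inequality on $\{G_N>0\}$), which makes the subsequent integration cleaner; and in your last step, the bound $|c_{f-h}|\le\varepsilon$ on the constant limit of $\tfrac1N S_N(f-h)$ is justified by Fatou's lemma applied to $|\tfrac1N S_N(f-h)|$, which you implicitly use but could state explicitly.

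That said, you should know that the paper does not prove this theorem at all: it simply quotes it as a classical result with a reference to Walters' textbook, since the Birkhoff theorem is used as a black-box tool rather than being a contribution of the paper. So there is no ``paper's own proof'' to compare against. Your write-up supplies a complete self-contained argument where the paper is content to cite; that is certainly more than what was required, but it is correct.
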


\subsection{Proof of Theorems~\ref{ThmB} and~\ref{ThmC}}\label{subsecProofThmsBandC} Applying the Birkhoff ergodic theorem to the expressions (\ref{eq:birkhoff}) and (\ref{eq:birkhoff:affine}) and using Siegel's formula and our volume computation from Section~\ref{secLattices}, we obtain, for almost every $\La \in X_d$,

\begin{equation}\label{eq:birkhoff:limit} \lim_{k \rightarrow \infty} \frac{1}{k} R_{b,2^k}(\La) = \lim_{k \rightarrow \infty} \frac{1}{k}  \sum_{i=0}^{k-1} \widehat{f}_b (g_{\log 2}^i \La) = \int_{X_d} \widehat{f}_b d\mu = \lambda(\mathcal R_b)= b B_m C_n \log 2 \end{equation}
and, for almost every $(\La + \vv) \in Y_d$,
\begin{equation}\label{eq:birkhoff:affine:limit} \lim_{k \rightarrow \infty} \frac{1}{k} R_{b,2^k}(\La + \vv) = \lim_{k \rightarrow \infty}  \frac{1}{k} \sum_{i=0}^{k-1} \tilde{f}_b (g_{\log 2}^i (\La+\vv)) = \int_{Y_d} \tilde{f}_b d\nu = \lambda(\mathcal R_b)= b B_m C_n \log 2.\end{equation}
Note that, if $F: [0, \infty) \rightarrow [0, \infty)$ is an increasing function and $ \frac{F(2^k)}{k} \rightarrow \log 2$, then \begin{align}\label{eqnIncFctLogLim}\lim_{T \rightarrow \infty} \frac{F(T)}{\log T} = 1,\end{align} which applied to (\ref{eq:birkhoff:limit}) and (\ref{eq:birkhoff:affine:limit}) yields Theorems~\ref{ThmB} and~\ref{ThmC}, respectively.\qed

\section{Translation Surfaces}  We now apply our technique to translation surfaces, giving a proof of Theorem \ref{trans}.
While above we appeal to the Siegel formula, in the setting of translation surfaces, we use the Siegel-Veech formula. 
\begin{thm}[Siegel-Veech formula, \cite{HODS1B}, pg. 584]\label{SiegelVeech}
Let $f$ be a continuous function of compact support on $\R^2$. Define $\widehat{f}$, a function on the stratum $\H$, by
\begin{equation*}
 \widehat{f}(\om) := \sum_{\vv \in \Lo} f(\vv);
\end{equation*}
here $\Lo$ refers to the set of holonomy vectors on the translation surface $\omega$. Let $\mu$ be a $\SL(2, \R)$-invariant measure on the stratum $\H$. Then there exists $C = C(\mu)$ such that
\begin{equation}\label{eqnSiegelVeech}
 \int_{\H} \widehat{f}(\om) \,\, d\mu(\om) = C \int_{\R^2} f \,\, d\l,
\end{equation}
where $\l$ is the Lebesgue measure on $\R^2$.
\end{thm}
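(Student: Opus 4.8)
The plan is to deduce the formula from the uniqueness of the $\SL(2,\R)$-invariant Radon measure on $\R^2\setminus\{0\}$: one shows that $f\mapsto\int_{\H}\widehat f\,d\mu$ is a positive, $\SL(2,\R)$-invariant linear functional on $C_c(\R^2)$, hence is integration against a constant multiple of Lebesgue measure, and one defines $C=C(\mu)$ to be that constant.

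First I would record the equivariance $\Lambda_{g\om}=g\,\La_\om$ (as subsets of $\R^2\cong\C$) of the set of holonomy vectors under the $\SL(2,\R)$-action on the stratum $\H$; this is immediate from the fact that $\SL(2,\R)$ acts on flat structures by postcomposing the charts with the ambient linear map $g$. Consequently $\widehat f(g\om)=\sum_{\vv\in\La_\om}f(g\vv)=\widehat{f_g}(\om)$, where $f_g(\vv):=f(g\vv)$, and since $\mu$ is $\SL(2,\R)$-invariant the functional $L(f):=\int_{\H}\widehat f\,d\mu$ satisfies $L(f_g)=L(f)$ for all $g\in\SL(2,\R)$; it is manifestly positive. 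The step I expect to be the main obstacle is to know that $L$ is finite, i.e.\ that $\widehat f\in L^1(\H,\mu)$: if $\mathrm{supp}(f)\subset B(0,R)$ then $0\le|\widehat f(\om)|\le\|f\|_\infty\,\#(\La_\om\cap B(0,R))=\|f\|_\infty N(\om,R)$, so this reduces to the $\mu$-integrability of the saddle-connection counting function $\om\mapsto N(\om,R)$, which is precisely the content of the quadratic upper bounds of Masur and Eskin--Masur (and, for a general $\SL(2,\R)$-invariant $\mu$, the harder part of the argument).

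Granting finiteness, I would apply the Riesz representation theorem to write $L(f)=\int_{\R^2}f\,d\nu$ for a Radon measure $\nu$ on $\R^2$; the identity $L(f_g)=L(f)$ then says $\nu$ is $\SL(2,\R)$-invariant. Now $\SL(2,\R)$ acts transitively on $\R^2\setminus\{0\}$ with stabilizer the one-parameter unipotent subgroup fixing $(1,0)$, which is unimodular, and $\SL(2,\R)$ itself is unimodular; hence $\R^2\setminus\{0\}\cong\SL(2,\R)/N$ carries a unique $\SL(2,\R)$-invariant Radon measure up to a positive scalar, and Lebesgue measure $\l$ is one such, so $\nu|_{\R^2\setminus\{0\}}=C\,\l$ for some $C\in[0,\infty)$. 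Finally I would dispose of the origin: a routine splitting of $f$ into a piece vanishing near $0$ and a piece supported in $B(0,\varepsilon)$, combined with dominated convergence (dominating function $N(\cdot,1)\in L^1(\mu)$, and $\#(\La_\om\cap B(0,\varepsilon))\to 0$ as $\varepsilon\to 0$ since $0\notin\La_\om$ and $\La_\om$ is discrete), shows that $\nu$ has no atom at $0$ and hence that $\int_{\H}\widehat f\,d\mu=C\int_{\R^2}f\,d\l$ for all $f\in C_c(\R^2)$. Setting $C=C(\mu):=\nu(B(0,1))/\pi$ then exhibits the Siegel--Veech constant and completes the proof.
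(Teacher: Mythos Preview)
The paper does not supply its own proof of this statement: Theorem~\ref{SiegelVeech} is quoted from Veech~\cite{HODS1B} with a page reference and used as a black box in the proof of Theorem~\ref{trans}. So there is nothing in the paper to compare your argument against.

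That said, your outline is exactly Veech's original argument and is correct. The three ingredients are (i) the equivariance $\La_{g\om}=g\La_\om$, giving $\SL(2,\R)$-invariance of the functional $L(f)=\int_{\H}\widehat f\,d\mu$; (ii) finiteness of $L$, i.e.\ $\om\mapsto N(\om,R)$ lies in $L^1(\mu)$; and (iii) the uniqueness (up to scalar) of an $\SL(2,\R)$-invariant Radon measure on $\R^2\setminus\{0\}$, which follows since $\R^2\setminus\{0\}\cong\SL(2,\R)/N$ with both groups unimodular. You correctly flag (ii) as the only nontrivial input: for the Masur--Veech measure this is due to Veech and Eskin--Masur, and for a general ergodic $\SL(2,\R)$-invariant probability measure one needs the regularity results of Eskin--Masur (this is precisely where the hypothesis that $\mu$ is such a measure enters, something the statement in the paper leaves implicit). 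Your treatment of a possible atom at $0$ is also fine, since $0\notin\La_\om$ and $\La_\om$ is discrete.

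One cosmetic remark: strictly speaking your definition $C(\mu)=\nu(B(0,1))/\pi$ is the same as the constant produced by the uniqueness argument, so there is no need to introduce it separately at the end; once $\nu|_{\R^2\setminus\{0\}}=C\l$ and $\nu(\{0\})=0$ you are done.
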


\begin{proof}[Proof of Theorem \ref{trans}:]
Let \[\cR_i := \left\{ (x,y) \in \R^2 : |xy| \leq b \mbox{ and } 2^{i-1} \leq y < 2^i \right\}\] for $i= 1, \ldots$, and define 
\begin{equation*}
g_t := \left( \begin{array}{cc}
 e^{t} & 0\\
  0  & e^{-t}\\
 \end{array} \right) .
\end{equation*}

Noting that $g_{- \log 2} \cR_i = \cR_{i+1}$, we have that 

\begin{align*}
 R_{b, 2^k} (\omega) &= \sum_{i=0}^{k-1} \#( \cR_{i} \cap \Lo ) =  \sum_{i=0}^{k-1} \#(\cR_1 \cap g_{\log 2}^{i} \Lo) = \sum_{i=0}^{k-1} \ind_{\cR_1}(g_{ \log 2}^{i} \Lo).
\end{align*}

By assumption, $\mu$ is an ergodic invariant measure for the action of $g_{\log 2}$ on $\H$; by the Birkhoff ergodic theorem, the Siegel-Veech formula, and our volume computation from Section~\ref{secLattices}, we have that
\begin{align*}
\lim_{k \rightarrow \infty}  \frac{R_{b, 2^k} (\omega) }{k} = \int_{\mathcal{H}} \widehat{\ind}_{\cR_1} \,\, d\mu = C \l( \cR_1 )= 2 b C \log 2.
\end{align*}  
Here $C = C(\mu).$  Note that, to apply the Siegel-Veech formula for the indicator function ${\ind}_{\cR_1}$, we proceed as follows.  Taking an outer approximation given by a tessellation of small closed squares, using the Urysohn lemma, and using the Siegel-Veech formula for continuous functions of compact support, derive an upper bound.  Similarly, for an inner approximation of small open squares, derive a lower bound.  Taking the limit, the desired relation (\ref{eqnSiegelVeech}) holds.  Applying (\ref{eqnIncFctLogLim}) to $F(t) := R_{b, t}(\om)$ yields the desired result. \end{proof}

\section{Applications}\label{secApplications} In this section, we show how to apply the arguments from Section \ref{sec:lattices} to obtain Theorems~\ref{ThmLinear} and \ref{ThmInhomLinear}.  (Note that Theorem~\ref{ThmA} follows immediately from Theorem~\ref{ThmLinear}.)  The main technique here is an approximation argument which may have further applicability to other questions arising from the interface of dynamics on the space of unimodular lattices and Diophantine approximation.  Let \[\Ga := \SL(d, \ZZ).\]

\subsection{Proof of Theorem~\ref{ThmLinear}}\label{subsubsecPTLinearTA} 

Recall that a unimodular lattice $\La \subset \R^{d}$ can be written as $\La = g \Z^{d}$, where $g \in \SL(d, \R)$ is well-defined up to multiplication on the right by elements of $\Ga$.  Consequently, $\La$ can be expressed as $g \Ga$ when it is regarded as an element in $X_d$, and, moreover, there is a bijection between the set of unimodular lattices in $\RR^d$ and $X_d$.  Given a matrix $A \in M_{m \times n}(\R)$, form the associated matrix $$ h_A : = \left(\begin{array}{cc}I_m & -A \\0 & I_n\end{array}\right).$$  Setting $\La_A := h_A \Z^d$, a direct calculation shows that we have, for $\widetilde{b}>0$ and $T >1$,  \begin{align}\label{eqnCountForForms}
 N(A, \widetilde{b}, T) = R_{\widetilde{b}^m,T}(\La_A).\end{align}  We also have that, as $t \rightarrow +\infty$, $$g_{-t} h_A g_{t} \rightarrow I_d.$$ In fact, the set \[\Nn:= \{h_A: A \in M_{m\times n}(\R)\}\] forms the \emph{expanding horospherical} subgroup for $\{g_t\}_{t\geq0}$, and it has a Haar measure, which we denote by $\wrt{\lambda}$.
 
%which still holds when $\La_A$ is replaced with $h_A \gamma \ZZ^d$ for any $\gamma \in \Ga$.  

Recall the definition of $\mu:=\mu_d$.  Define the open set \[\Mm := \left\{\begin{pmatrix} B & A \\ C & D \end{pmatrix} \in \SL(d, \RR) : B \in \GL(m, \RR), A \in  M_{m \times n}(\R), C \in M_{n \times m}(\R), D \in M_{n \times n}(\R) \right\},\] and we note that the set $\SL(d, \RR) \backslash \Mm$ is $\mu$-null.  Restricting the set $\Mm$ to only those elements for which $A$ equals the $m \times n$ matrix with all zero entries, a matrix which we denote by $0$, we obtain \[\H := \left\{\begin{pmatrix} B & 0 \\ C & D \end{pmatrix} \in \SL(d, \RR) : B \in \GL(m, \RR), C \in M_{n \times m}(\R), D \in \GL(n, \R) \right\},\] which, by direct computation, is a subgroup of $\SL(d, \RR)$.  Let $\wrt{\mu_\H}$ denote a left Haar measure on $\H$.  

We now give a parametrization of $\Mm$, which is, essentially, the parametrization given by J.~Marklof in~\cite[Section~3]{Mar10} (see also (2.11) in S.~G.~Dani's paper~\cite{Dani}).

\begin{lemm}\label{lemmParaOfG}
Any element of $\Mm$ can be uniquely expressed as \[\begin{pmatrix} B & 0 \\ C & D \end{pmatrix} h_A\] for some $A \in  M_{m \times n}(\R), B \in \GL(m,\RR)$, $C \in M_{n \times m}(\R)$, and $D \in \GL(n, \RR)$.
\end{lemm}

\begin{proof}  Let \[\begin{pmatrix} \beta & \alpha \\ \gamma & \delta \end{pmatrix}\in \Mm.\]  Then setting $B = \beta$, $C=\gamma$, $A= -\beta^{-1} \alpha$, and $D =\delta- \gamma \beta^{-1} \alpha$ yields the decomposition.  Since the determinant of \[\begin{pmatrix} B & 0 \\ C & D \end{pmatrix}\] is $1$, $D$ is invertible.  This shows that every element of $\Mm$ can be expressed as desired.

If an element of $\Mm$ has two decompositions, then we have \[ \tensor*[]{\begin{pmatrix} B' & 0 \\ C' & D' \end{pmatrix}}{^{-1}} \begin{pmatrix} B & 0 \\ C & D \end{pmatrix}  =h_{A'} h_{-A}.\]  Multiplying out the matrices shows that $A=A'$, $B=B'$, $C=C'$, and $D=D'$.  This gives the desired uniqueness.
 
\end{proof}

Using the parametrization given by Lemma~\ref{lemmParaOfG}, we have that a Haar measure on $\SL(d, \RR)$ is given by \[\wrt{\mu_\H}~\wrt{\lambda},\] which, thus, must be a constant multiple of $\wrt{\mu}$.  By normalizing $\wrt{\mu_\H}$ appropriately, we may assume that the constant is $1$.

Given a subset $\frak{B} \subset \SL(d, \RR),$ define the subset of $X_d$: \[\frak{B} / \Ga := \{g \Ga : g \in \frak{B}\}.\]  Now consider the following submanifold of $\Mm /\Ga \subset X_d$:

\[\W :=  \left\{ h_{A} \Ga : A \in M_{m\times n}(\R)\right\}.\]
We call a unimodular lattice $\La$ {\em Birkhoff generic (for the action of $g_{\log 2}$ with respect to the function $ \widehat{f}_b$)} if \begin{align}\label{eqnTimeSumSpaceSum}
 \lim_{j \rightarrow \infty} \frac{1}{j}  \sum_{i=0}^{j-1} \widehat{f}_b (g_{\log 2}^i \La)= \int_{X_d} \widehat{f}_b d\mu.
\end{align} holds.  By the ergodicity of $g_{-\log 2}$ and the Birkhoff ergodic theorem, (\ref{eqnTimeSumSpaceSum}) holds for $\mu$-almost every $\La$.

To obtain our desired result, we now use an approximation argument to show that almost every lattice in the submanifold $\W$ is Birkhoff generic.  The key difficulty, which we overcome with the proof we now give, is that the functions $\widehat{f}_b$ are unbounded.  Recall that $\SL(d, \RR)$ has a right $\SL(d, \RR)$-invariant metric, using which it follows that the right action of elements of $\Ga$ are by isometries.  Fix a Borel fundamental domain (or strict fundamental domain) $\F$ for the action of $\Ga$ on $\SL(d, \RR)$, namely a Borel set $\F \subset \SL(d, \RR)$ for which $\SL(d, \RR) = \F \Ga$, $\F \gamma_1 \cap \F \gamma_2 = \emptyset$ for all $\gamma_1, \gamma_2 \in \Ga$ such that $\gamma_1 \neq \gamma_2$, $\mu(\partial \F) =0$ where $\partial \F$ is the boundary of $\F$, and, for every compact set $K \subset \SL(d, \RR)$, the set $\{\gamma \in \Ga:  \F \gamma \cap K \neq \emptyset\}$ is finite (see~\cite[Chapter~1,~ Section~(0.40)]{Ma}).  Note that the canonical projection mapping restricted to $\F$ is a bijection and $\mu(\F) =\mu(X_d) =1$.  Moreover, any subset $W$ of $X_d$ lifts via the canonical projection mapping to a disjoint union of isometric subsets and the measure of the intersection of this union with $\F$ is finite and equal to the measure of one of these disjoint subsets.   

%onto its image $\widetilde{X}_d := \{g\Ga: g \in\F\} \subset X_d$ and $\mu(\widetilde{X}_d) =\mu(X_d) =1$

Pick a lattice \[\widetilde{\La}:= h_{\widetilde{A}} \Ga\] in $\W$.  Let $U_{\H}(I_d)$ be a small open ball in $\H$ around the identity element $I_d$ and $U_{\W}(h_{\widetilde{A}})$ be a small open ball in ${\W}$ around $h_{\widetilde{A}}\Ga$.  Then, by Lemma~\ref{lemmParaOfG}, \begin{align}\label{eqnOpenSetU}
 U:=U(\widetilde{\La}):=U_{\H}(I_d) U_{\W}(h_{\widetilde{A}}) \end{align} is a small open set in $X_d$ containing $\widetilde{\La}$.\footnote{For our proof, we can relax the conditions on $U_{\H}(I_d)$ and $U_{\W}(h_{\widetilde{A}})$, if desired, as follows.  We can replace the requirement that these are small by the requirement that the multiplication mapping \[U_{\H}(I_d) \times U_{\W}(h_{\widetilde{A}}) \rightarrow U\] is injective.  We can replace the requirement that $U_{\W}(h_{\widetilde{A}})$ is an open ball by the requirement that it is a measurable set of nonzero measure containing $h_{\widetilde{A}}$.  Finally, we can replace the requirement that $U_{\H}(I_d)$ is an open ball with the requirement that it is a measurable set containing $I_d$ such that every open neighborhood of $I_d$ meets the set in a subset of nonzero measure.}
 
 % Recall that $\SL(d, \RR)$ has a right $\SL(d, \RR)$-invariant metric, using which it follows that the right action of elements of $\Ga$ are by isometries.  Fix a fundamental domain $\F$ for the action of $\Ga$.  Recall that the lattice $\Ga$ acts properly discontinuously on $\SL(d, \RR)$ and, thus, lifting $U$ to $\SL(d, \RR)$ using the canonical projection mapping, we obtain an infinite family of disjoint isometric sets.  The $\mu$-measure of this family that lies in $\F$ is finite and is equal to the $\mu$-measure of one of these disjoint sets.  

We now show, roughly speaking, that almost all lattices of almost all $\H$-translates of the submanifold $\W$ are Birkhoff generic.  Precisely, we apply the following (which is simply an application of Fubini's theorem) to $U$:

% to $U \cap \widetilde{X}_d$:

%\begin{align}\label{eqnFullMeasBGSetCond}
 %\int_{\Nn} \In_{V_M}(h_A)~\wrt{\lambda(A)} = \int_{\Nn} \In_{U_{\W}}(h_A)~\wrt{\lambda(A)} \end{align}

\begin{prop}\label{propFullMeasureBirkGen}
Let  $b >0$ and $f_{b}$ be the indicator function of the set  $$\mathcal R_b := \left \{ (\xx, \yy) \in \R^{d}: \|\xx\|^m \|\yy\|^n \leq b, 1 \le \|\yy\| < 2 \right\}.$$  Let $U := U_{\H} U_{\W}$, parametrized using Lemma~\ref{lemmParaOfG}, be a set of positive $\mu$-measure in $\Mm / \Ga$.   For $\mu_\H$-almost every $M \in U_{\H}$, there exists a subset $V_M \subset U_{\W}$ such that \begin{align}\label{eqnFullMeasBGSetCond} \lambda(V_M) = \lambda(U_{\W}) \end{align} and, for every $h_A\Ga \in V_M$, the lattice \[Mh_{A} \Ga\] is Birkhoff generic with respect to the function $ \widehat{f}_b$. 
\end{prop}

\begin{proof} %By construction, the set $U$ is a subset of $\Mm / \Ga$ and 

We may, without loss of generality, regard $U$ as a subset of $\F$.  Since (\ref{eqnTimeSumSpaceSum}) holds for $\mu$-almost every element in $X_d$, there exists a set $U_{bg} \subset U$ such that every element in $ U_{bg}\Ga$ is Birkhoff generic with respect to the function $ \widehat{f}_b$ and such that \begin{align}\label{eqnBgFullMeas}
 \mu(U_{bg}) = \mu(U). \end{align}  %We use the parametrization given in Lemma~\ref{lemmParaOfG}.
 
For $M \in \H$, define \begin{align*}
U_{bg, M} := \begin{cases}  \{h_A \in U_{\W} : Mh_A \in U_{bg}\} &\text{ if } Mh_A \in U_{bg}\\ \emptyset &\text{ if }  Mh_A \notin U_{bg}\end{cases}.
\end{align*}  Fubini's theorem implies that \[\In_{U_{bg}}(Mh_A)\] is $\lambda$-integrable for $\mu_{\H}$-almost every $M.$  If, for $\mu_{\H}$-almost every $M\in U_{\H}$, (\ref{eqnFullMeasBGSetCond}) holds when we set \[V_M :=U_{bg, M}\Ga,\] then we have proved the proposition.

%\begin{align}\label{eqnFubiniViolation}
% \int_{\Nn} \In_{U_{bg}}(M h_A)~\wrt{\lambda(A)} = \int_{\Nn} \In_{U_{bg,M}}(h_A)~\wrt{\lambda(A)}< \int_{\Nn} \In_{U_{\W}}(h_A)~\wrt{\lambda(A)}. \end{align} 

Otherwise, there exists a subset $V$ of $U_{\H}$ of positive $\mu_{\H}$-measure such that, for every element $M \in V$, we have that \begin{align}\label{eqnFubiniViolation}
\lambda(U_{bg,M}) < \lambda(U_{\W}). \end{align}  Integrating using Fubini's theorem, we have that \begin{align*}
 \mu(U_{bg}) & = \int_{U_{\H}} \int_{U_{\W}} \In_{U_{bg}}(M h_A)~\wrt{\lambda(A)}~\wrt{\mu_\H(M)}\\ &= \int_{U_{\H}\backslash V} \int_{U_{\W}} \In_{U_{bg}}(M h_A)~\wrt{\lambda(A)}~\wrt{\mu_\H(M)} + \int_{ V} \int_{U_{\W}} \In_{U_{bg}}(M h_A)~\wrt{\lambda(A)}~\wrt{\mu_\H(M)}  \\ &= \int_{U_{\H}\backslash V} \int_{U_{\W}} \In_{U_{\H}\backslash V} (M) \In_{U_{bg,M}}(h_A)~\wrt{\lambda(A)}~\wrt{\mu_\H(M)} \\ &\quad \quad \quad \quad \quad \quad\quad \quad \quad\quad \quad \quad+ \int_{ V} \int_{U_{\W}} \In_V(M) \In_{U_{bg,M}}(h_A)~\wrt{\lambda(A)}~\wrt{\mu_\H(M)} \\ & < \int_{U_{\H}} \int_{U_{\W}} \In_{U_{\H}}(M) \In_{U_{\W}}(h_A)~\wrt{\lambda(A)}~\wrt{\mu_\H(M)} = \mu(U),\end{align*} where we have applied (\ref{eqnFubiniViolation}) to obtain the strict inequality.  This contradicts (\ref{eqnBgFullMeas}).  Consequently, the set $V$ cannot exist and the proof of the proposition is complete.
\end{proof}

%By Proposition~\ref{propFullMeasureBirkGen}, the set $L_\infty$ is a $\lambda$-full measure subset of $U_{\W}(h_{\widetilde{A}})$.  

%For the set $U$ in (\ref{eqnOpenSetU}), we have that $U \cap \widetilde{X}_d = \widetilde{U}_{\H} \widetilde{U}_{\W}$ where $\widetilde{U}_{\H} \subset U_{\H}(I_d)$ and $\widetilde{U}_{\W} \subset U_{\W}(h_{\widetilde{A}})$ are subsets for which $\mu_\H(\widetilde{U}_{\H}) = \mu_\H(U_{\H}(I_d))$ and $\lambda(\widetilde{U}_{\W}) = \lambda(U_{\W}(h_{\widetilde{A}}).$  

Let $0<\varepsilon_\ell' \rightarrow 0$ be a given decreasing sequence indexed by $\ell \in \N$.  Applying Proposition~\ref{propFullMeasureBirkGen} to the set $U$ in (\ref{eqnOpenSetU}), we have a full-measure subset of $U_{\H}(I_d)$ from which to pick a sequence of elements \[\left\{\begin{pmatrix} B_\ell & 0 \\ C_\ell & D_\ell \end{pmatrix}\right\}_\ell\] such that \begin{align}\label{eqnNearId}\nonumber
 B_\ell & \xrightarrow[\ell \rightarrow \infty]{} I_m \\ C_\ell & \xrightarrow[\ell \rightarrow \infty]{} 0 \in M_{m,n}(\R) \\\nonumber D_\ell & \xrightarrow[\ell \rightarrow \infty]{} I_n\end{align} and, for which, there exists a sequence of full-measure subsets $V_\ell \subset U_{\W}(h_{\widetilde{A}})$ such that, for every $h_A\Ga \in V_\ell$, the lattice\begin{align}\label{eqnBirkLatticeNearCenter}
  \begin{pmatrix} B_\ell & 0 \\ C_\ell & D_\ell \end{pmatrix} h_A \Ga  \end{align}is Birkhoff generic with respect to $ \widehat{f}_{b}, \widehat{f}_{b +\varepsilon'_\ell},$ and $\widehat{f}_{b- \varepsilon'_\ell}$.  Define \[L_\infty := \bigcap_{\ell\in \N} V_\ell\] and note that $L_\infty \subset U_{\W}(h_{\widetilde{A}})$ and $\lambda(L_\infty) =  \lambda(U_{\W}(h_{\widetilde{A}}))$.  Hence, picking any lattice $h_A \Ga \in L_\infty$ implies that the lattices \[\La_\ell:=\begin{pmatrix} B_\ell & 0 \\ C_\ell & D_\ell \end{pmatrix} h_{A} \Ga\] are Birkhoff generic with respect to $ \widehat{f}_{b}, \widehat{f}_{b +\varepsilon'_\ell},$ and $\widehat{f}_{b- \varepsilon'_\ell}$ for all $\ell$.  

%Using Proposition~\ref{propFullMeasureBirkGen}, we will define a $\lambda$-full measure set $L_\infty \subset U_{\W}(h_{\widetilde{A}})$ as follows.  Let $0<\varepsilon_\ell' \rightarrow 0$ be a given decreasing sequence indexed by $\ell \in \N$.  From $U_{\H}(I_d)$, pick a sequence of elements \[\left\{\begin{pmatrix} B_\ell & 0 \\ C_\ell & D_\ell \end{pmatrix}\right\}_\ell\] such that, for every element $h_A\Ga$ of $L_\infty$, the lattice\begin{align}\label{eqnBirkLatticeNearCenter}
%  \begin{pmatrix} B_\ell & 0 \\ C_\ell & D_\ell \end{pmatrix} h_A \Ga  \end{align}is Birkhoff generic with respect to $ \widehat{f}_{b}, \widehat{f}_{b +\varepsilon'_\ell},$ and $\widehat{f}_{b- \varepsilon'_\ell}$ for all $\ell$ and such that \begin{align}\label{eqnNearId}\nonumber
 %B_\ell & \xrightarrow[\ell \rightarrow \infty]{} I_m \\ C_\ell & \xrightarrow[\ell \rightarrow \infty]{} 0 \in M_{m,n}(\R) \\\nonumber D_\ell & \xrightarrow[\ell \rightarrow \infty]{} I_n.\end{align}  Hence, picking any lattice $h_A \Ga \in L_\infty$ implies that the lattices \[\La_\ell:=\begin{pmatrix} B_\ell & 0 \\ C_\ell & D_\ell \end{pmatrix} h_{A} \Ga\] are Birkhoff generic with respect to $ \widehat{f}_{b}, \widehat{f}_{b +\varepsilon'_\ell},$ and $\widehat{f}_{b- \varepsilon'_\ell}$ for all $\ell$.  
 
 %Let $\gamma \in \Ga$.  
%First consider the case of $\gamma = I_d$, namely the following:

We now approximate lattices $h_A \Ga \in L_\infty$ in the full-measure subset $L_\infty\subset U_{\W}(h_{\widetilde{A}})$ with the lattices $\La_\ell$ as follows.  Recall that the unimodular lattice $g \Ga$ can be expressed as $g \ZZ^d \subset \RR^d$.    Thus, we have \begin{align}\label{eqnGammaIsIdentity}
h_A \ZZ^d, \quad \quad \La_\ell = \begin{pmatrix} B_\ell & 0 \\ C_\ell & D_\ell \end{pmatrix}h_A\ZZ^d. \end{align} Counting lattice points of $h_A \ZZ^d$ in a thinning region \[\widetilde{\cR}_b(\RR^d):= \left \{ (\xx, \yy) \in \RR^d: \|\xx\|^m \|\yy\|^n \leq b, 1 \le \|\yy\| \right\},\] namely finding the cardinality of \[h_A \ZZ^d \cap \widetilde{\cR}_b(\RR^d) =: \widetilde{\cR}_b(h_A \ZZ^d),\] is the same as counting lattice points of $\La_\ell$ in the skewed thinning region \[\cS_{b, \ell} (\RR^d):=\begin{pmatrix} B_\ell & 0 \\ C_\ell & D_\ell \end{pmatrix} \widetilde{\cR}_b(\RR^d)= \left \{\begin{pmatrix} B_\ell & 0 \\ C_\ell & D_\ell \end{pmatrix} (\xx, \yy) \in \RR^d: \|\xx\|^m \|\yy\|^n \leq b, 1 \le \|\yy\| \right\}\] because the lattice points in these two regions are in bijection.  Here, recall, we have used the notation $(\xx, \yy)$ to denote a $d$-column vector.

Since (\ref{eqnNearId}) holds for $B_\ell, C_\ell,$ and $D_\ell$, we have, for every $(\xx, \yy) \in \widetilde{\cR}_b(\RR^d)$, that 

\begin{align}\label{eqnSkewRegionApproxBnds}
 (1 - \varepsilon_\ell) \|\yy\|- \widetilde{\varepsilon}_\ell &\leq \|C_\ell\xx + D_\ell \yy\| \leq (1 + \varepsilon_\ell)\|\yy\| + \widetilde{\varepsilon}_\ell  \\\nonumber (1 - \widehat{\varepsilon}_\ell) \|\xx\|^m\|\yy\|^n  & \leq \| B_\ell \xx\|^m\| C_\ell\xx + D_\ell \yy\|^n \leq(1 + \widehat{\varepsilon}_\ell)\|\xx\|^m\|\yy\|^n \end{align} where $\varepsilon_\ell, \widetilde{\varepsilon}_\ell, \widehat{\varepsilon}_\ell  \rightarrow 0$ as $\ell \rightarrow \infty$.  (Note that $\widetilde{\varepsilon}_\ell$ does not depend on $\xx$ because $\|\xx\|$ is uniformly bounded.)

%, and, for the skewed thinning region, $\|\yy\| \geq 1$

Since we have $\|\xx\|^m \|\yy\|^n \leq b$ and $\|\yy\| \geq 1$, (\ref{eqnSkewRegionApproxBnds}) implies that we may approximate the skewed thinning region $\cS_{b, \ell} (\RR^d)$ with inner and outer thinning regions $\widetilde{\cR}_{b-\varepsilon'_\ell}(\RR^d)$ and $\widetilde{\cR}_{b+\varepsilon'_\ell}(\RR^d)$ up to, possibly, a small precompact set $\Kk_\ell$.  The set $\Kk_\ell$ that we might need to exclude arises as follows.  The inner and outer thinning regions $\widetilde{\cR}_{b-\varepsilon'_\ell}(\RR^d)$ and $\widetilde{\cR}_{b+\varepsilon'_\ell}(\RR^d)$ satisfy the constraint that $\|\yy\|\geq1$, but, for the skewed thinning region $\cS_{b, \ell} (\RR^d)$, the infimum of $\|C_\ell\xx + D_\ell \yy\|$ is close to $1$ but not necessarily $1$.  This follows because $\|C_\ell \xx\| \leq \widetilde{\varepsilon}_\ell$, $\|D_\ell \yy\| \geq (1 - \varepsilon_\ell)\|\yy\|$.  Thus, we must define $\Kk_\ell$ as follows:  \[\Kk_\ell := \begin{cases} \emptyset & \textrm{if } \inf \{\|C_\ell\xx + D_\ell \yy\| : (\xx, \yy) \in \widetilde{\cR}_b(\RR^d) \} \geq 1 \\ \{(\xx, \yy) \in \widetilde{\cR}_b(\RR^d): \|C_\ell\xx + D_\ell \yy\| <1\}& \textrm{if } \inf \{\|C_\ell\xx + D_\ell \yy\| : (\xx, \yy) \in \widetilde{\cR}_b(\RR^d) \} < 1\end{cases}.\]  Therefore, we have the following inclusions \[\widetilde{\cR}_{b-\varepsilon'_\ell}(\RR^d) \subset \begin{pmatrix} B_\ell & 0 \\ C_\ell & D_\ell \end{pmatrix} \left(\widetilde{\cR}_{b}(\RR^d) \backslash \Kk_\ell\right) \subset \widetilde{\cR}_{b+\varepsilon'_\ell}(\RR^d)\] and thus also \[\La_\ell \cap \widetilde{\cR}_{b-\varepsilon'_\ell}(\RR^d) \subset \begin{pmatrix} B_\ell & 0 \\ C_\ell & D_\ell \end{pmatrix} \left(h_A \ZZ^d  \cap \widetilde{\cR}_{b}(\RR^d) \backslash \Kk_\ell\right) \subset \La_\ell \cap \widetilde{\cR}_{b+\varepsilon'_\ell}(\RR^d).\]

Now let us repeat the above, namely counting lattice points, with the thinning region $\widetilde{\cR}_b(\RR^d)$ replaced by its truncation \[\widetilde{\cR}_{b, 2^j}(\RR^d):= \left \{ (\xx, \yy) \in \RR^d: \|\xx\|^m \|\yy\|^n \leq b, 1 \le \|\yy\| < 2^j\right\}.\]  From this, we obtain \begin{align}\label{eqnLatticeRegIncUpToCpct}\La_\ell \cap \widetilde{\cR}_{b-\varepsilon'_\ell, 2^j}(\RR^d) \subset \begin{pmatrix} B_\ell & 0 \\ C_\ell & D_\ell \end{pmatrix} \left(h_A \ZZ^d  \cap \widetilde{\cR}_{b, 2^j}(\RR^d) \backslash \Kk_\ell\right) \subset \La_\ell \cap \widetilde{\cR}_{b+\varepsilon'_\ell, 2^{j+1}}(\RR^d),\end{align}  where the latter inclusion arises because we have that \[\sup \left\{\|C_\ell\xx + D_\ell \yy\| :(\xx, \yy) \in\widetilde{\cR}_{b, 2^j}(\RR^d) \right\} \leq (1 + \varepsilon_\ell)2^j + \widetilde{\varepsilon}_\ell < 2^{j+1}.\]

%the analog of which also holds when $\gamma = I_d$ is replaced by any $\gamma \in \Ga$ in (\ref{eqnGammaIsIdentity}).

Recall the notation and discussion in Section~\ref{subsecDiagFlows}.  Now (\ref{eqnLatticeRegIncUpToCpct}) gives \[R_{b-\varepsilon'_\ell, 2^j}(\La_\ell) \leq R_{b, 2^j}(h_A \ZZ^d) - K_\ell\leq R_{b+\varepsilon'_\ell, 2^{j+1}}(\La_\ell)\] where \[K_\ell:=\#\left(h_A \ZZ^d \cap \Kk_\ell \right).\]  Since any precompact set of $\RR^d$ can only contain a finite number of lattice points, it follows that $K_\ell <\infty.$  Consequently, we have that \[\lim_{j \rightarrow \infty} \frac 1 j R_{b-\varepsilon'_\ell, 2^j}(\La_\ell) \leq \lim_{j \rightarrow \infty}\frac 1 j R_{b, 2^j}(h_A \ZZ^d)\leq \lim_{j \rightarrow \infty}\frac 1 {j+1} R_{b+\varepsilon'_\ell, 2^{j+1}}(\La_\ell).\]  Since, for every $\ell \in \N$, the lattice $\La_\ell \in X_d$ is Birkhoff generic with respect to $ \widehat{f}_{b}, \widehat{f}_{b +\varepsilon'_\ell},$ and $\widehat{f}_{b- \varepsilon'_\ell}$, we immediately have that \[\int_{X_d} \widehat{f}_{b-\varepsilon'_\ell} d\mu \leq \lim_{j \rightarrow \infty}\frac 1 j R_{b, 2^j}(h_A \Ga)\leq \int_{X_d} \widehat{f}_{b+\varepsilon'_\ell} d\mu.\]  Applying Siegel's formula and our volume computation from Section~\ref{secLattices} and letting $\ell \rightarrow \infty$ show that every lattice in $L_\infty$ is Birkhoff generic with respect to $\widehat{f}_{b}$.   As the chosen lattice $\widetilde{\La}$ and small open set $U$ are arbitrary, this proof is valid everywhere, showing that almost every lattice in ${\W}$ is Birkhoff generic with respect to $\widehat{f}_{b}$.  Using this Birkhoff genericity in place of the Birkhoff ergodic theorem in the proof of (\ref{eq:birkhoff:limit}) yields:  for almost every lattice in $h_A \Ga \in {\W}$, \[\lim_{j \rightarrow \infty}\frac 1 j R_{b, 2^j}(h_A \Ga) = b B_m C_n \log 2.\]  Using (\ref{eqnIncFctLogLim}) and setting $b := \widetilde{b}^m$, we have that, for almost every lattice in $h_A \Ga \in {\W}$, \[\lim_{T \rightarrow \infty}\frac {R_{\widetilde{b}^m, T}(h_A \Ga)} {\widetilde{b}^m B_m C_n \log T}=1.\]  Finally, using (\ref{eqnCountForForms}) proves Theorem~\ref{ThmLinear}.  Theorem~\ref{ThmA} is a special case of Theorem~\ref{ThmLinear} with $m=k, n=1$.\qed

%Applying this to the proof in Section~\ref{subsecProofThmsBandC} and using (\ref{eqnCountForForms}) with $b := \widetilde{b}^m$ proves 

\begin{rema}\label{remaFoliateUnstableThruStand}
As a remark, which is not necessary for our proof, we note that Lemma~\ref{lemmParaOfG} implies that \[\{M\W\}_{M \in \H}\] forms a smooth foliation of the open set of full measure $\Mm / \Ga$ and, thus essentially, of $X_d$ also.  The leaves are  $\H$-translates of the submanifold $\W$.
\end{rema}

\subsection{Proof of Theorem~\ref{ThmInhomLinear}}  This is the affine lattice case and will follow by using the argument in Section~\ref{subsubsecPTLinearTA} with a few minor changes.  The changes are as follows.  An affine unimodular lattice $\La + \boldsymbol{v}$ is uniquely determined by a unimodular lattice $\La$ and a vector $\boldsymbol{v} \in \RR^d /\La$.  Using Lemma~\ref{lemmParaOfG} and the fact that $\H$ is a group, we have that any affine unimodular lattice $\La + \vv$ with $\La \in \Mm/\Ga$ can be uniquely written as \[\begin{pmatrix} B & 0 \\ C & D \end{pmatrix}\left( h_A \Ga + \begin{pmatrix}  \vv_1' \\ \vv_2'\end{pmatrix}\right)\] where \[\begin{pmatrix}  \vv_1' \\ \vv_2'\end{pmatrix}\in \RR^d/(h_A \ZZ^d).\]  We note such affine unimodular lattices form an open set whose complement has zero Haar measure.  Also note that the parametrization for the affine lattice case is given by \[\left(\H \ltimes \begin{pmatrix}  \boldsymbol{0} \\ \RR^n \end{pmatrix}\right) \left(\Nn \ltimes  \begin{pmatrix}  \RR^m  \\\boldsymbol{0}\end{pmatrix} \right).\]

Given a pair $(A, \ww) \in M_{m \times n}(\R) \times \R^m$, form the associated affine unimodular lattice $\La_A + \bar{\ww} = h_A \Z^d + \bar{\ww}$, where $\bar{\ww}$ is the vector $(\ww, \mathbf{0}) \in \R^d/( h_A \Z^d)$. As above, a direct calculation shows that we have, for $\widetilde{b}>0$ and $T >1$,  $$N(A, \ww, \widetilde{b}, T) = R_{\widetilde{b}^m,T}(\La_A + \bar{\ww}).$$ 

%for every $\gamma \in \Ga$. 

The analog of $\W$ from Section~\ref{subsubsecPTLinearTA} is \[\W_{\aff}:=\left\{h_{\widetilde{A}} \Ga + \begin{pmatrix}  \widetilde{\ww} \\ \boldsymbol{0}\end{pmatrix}: \widetilde{A} \in M_{m\times n}(\R) \textrm{ and } \widetilde{\ww} \in \RR^m\right\}.\]   
Finally, we note that a thinning region can be approximated by inner and outer thinning regions translated by \[\begin{pmatrix}  \boldsymbol{0}\\ \vv_\ell\end{pmatrix} \in \RR^m \times \RR^n\] where $\vv_\ell \rightarrow \boldsymbol{0}$ as $\ell \rightarrow \infty$.  (This is because the directions extending to infinity of the thinning region is preserved by such translations.)  We note that this approximation is in addition to the approximation of Section~\ref{subsubsecPTLinearTA}.  This shows that almost every affine lattice in $\W_{\aff}$ is Birkhoff generic and, thus, proves Theorem~\ref{ThmInhomLinear}.
\qed

Note the analog of Remark~\ref{remaFoliateUnstableThruStand} applies here.

\subsection*{Acknowledgements}  We are grateful to the referees for their helpful comments.

\bibliography{MetricalTranslationSurfaces}{}

\begin{thebibliography}{10}

\bibitem{AChaika}
J.~S. Athreya and J.~Chaika.
\newblock The distribution of gaps for saddle connection directions.
\newblock {\em Geom. Funct. Anal.}, 22(6):1491--1516, 2012.

\bibitem{ACL}
J.~S. Athreya, J.~Chaika, and S.~Lelievre.
\newblock The gap distribution of slopes on the golden {L}, 2013,
  arXiv:1308.4203, to appear, Contemporary Mathematics.

\bibitem{AGT2}
J.~S. {Athreya}, A.~{Ghosh}, and J.~{Tseng}.
\newblock {Spherical averages of Siegel transforms for higher rank diagonal
  actions and applications}.
\newblock {\em ArXiv e-prints}, July 2014, 1407.3573.

\bibitem{AGT1}
J.~S. Athreya, A.~Ghosh, and J.~Tseng.
\newblock Spiraling of approximations and spherical averages of {S}iegel
  transforms.
\newblock {\em Journal of the London Mathematical Society}, 91(2):383--404,
  2015.

\bibitem{CCM}
J.~Chaika, Y.~Cheung, and H.~Masur.
\newblock Winning games for bounded geodesics in moduli spaces of quadratic
  differentials.
\newblock {\em J. Mod. Dyn.}, 7(3):395--427, 2013.

\bibitem{Dani}
S.~G. Dani.
\newblock Divergent trajectories of flows on homogeneous spaces and
  {D}iophantine approximation.
\newblock {\em J. Reine Angew. Math.}, 359:55--89, 1985.

\bibitem{ET}
M.~Einsiedler and J.~Tseng.
\newblock Badly approximable systems of affine forms, fractals, and {S}chmidt
  games.
\newblock {\em J. Reine Angew. Math.}, 660:83--97, 2011.

\bibitem{EMMopp}
A.~Eskin, G.~Margulis, and S.~Mozes.
\newblock Upper bounds and asymptotics in a quantitative version of the
  {O}ppenheim conjecture.
\newblock {\em Ann. of Math. (2)}, 147(1):93--141, 1998.

\bibitem{EMcount}
A.~Eskin and H.~Masur.
\newblock Asymptotic formulas on flat surfaces.
\newblock {\em Ergodic Theory Dynam. Systems}, 21(2):443--478, 2001.

\bibitem{EMMflat}
A.~Eskin, M.~Mirzakhani, and A.~Mohammadi.
\newblock Isolation, equidistribution, and orbit closures for the sl(2,r)
  action on moduli space, 2013, arXiv:1305.3015, to appear, Annals of
  Mathematics.

\bibitem{HODS1B}
B.~Hasselblatt and A.~Katok, editors.
\newblock {\em Handbook of dynamical systems. {V}ol. 1{B}}.
\newblock Elsevier B. V., Amsterdam, 2006.

\bibitem{Khin1926}
A.~Khintchine.
\newblock Zur metrischen {T}heorie der diophantischen {A}pproximationen.
\newblock {\em Math. Z.}, 24(1):706--714, 1926.

\bibitem{Kl}
D.~Kleinbock.
\newblock Badly approximable systems of affine forms.
\newblock {\em J. Number Theory}, 79(1):83--102, 1999.

\bibitem{KSS}
D.~Kleinbock, N.~Shah, and A.~Starkov.
\newblock Dynamics of subgroup actions on homogeneous spaces of {L}ie groups
  and applications to number theory.
\newblock In {\em Handbook of dynamical systems, {V}ol.\ 1{A}}, pages 813--930.
  North-Holland, Amsterdam, 2002.

\bibitem{KW}
D.~Kleinbock and B.~Weiss.
\newblock Bounded geodesics in moduli space.
\newblock {\em Int. Math. Res. Not.}, (30):1551--1560, 2004.

\bibitem{Macbeath}
A.~M. Macbeath and C.~A. Rogers.
\newblock Siegel's mean value theorem in the geometry of numbers.
\newblock {\em Proc. Cambridge Philos. Soc.}, 54:139--151, 1958.

\bibitem{Ma}
G.~A. Margulis.
\newblock {\em Discrete subgroups of semisimple {L}ie groups}, volume~17 of
  {\em Ergebnisse der Mathematik und ihrer Grenzgebiete (3) [Results in
  Mathematics and Related Areas (3)]}.
\newblock Springer-Verlag, Berlin, 1991.

\bibitem{Mar10}
J.~Marklof.
\newblock The asymptotic distribution of {F}robenius numbers.
\newblock {\em Invent. Math.}, 181(1):179--207, 2010.

\bibitem{MS3}
J.~Marklof and A.~Str{\"o}mbergsson.
\newblock Free path lengths in quasicrystals.
\newblock {\em Comm. Math. Phys.}, 330(2):723--755, 2014.

\bibitem{Masur}
H.~Masur.
\newblock Interval exchange transformations and measured foliations.
\newblock {\em Ann. of Math. (2)}, 115(1):169--200, 1982.

\bibitem{Masurcount}
H.~Masur.
\newblock The growth rate of trajectories of a quadratic differential.
\newblock {\em Ergodic Theory Dynam. Systems}, 10(1):151--176, 1990.

\bibitem{MT}
H.~Masur and S.~Tabachnikov.
\newblock Rational billiards and flat structures.
\newblock In {\em Handbook of dynamical systems, {V}ol.\ 1{A}}, pages
  1015--1089. North-Holland, Amsterdam, 2002.

\bibitem{Moore}
C.~C. Moore.
\newblock Ergodicity of flows on homogeneous spaces.
\newblock {\em Amer. J. Math.}, 88:154--178, 1966.

\bibitem{SchmidtMetrical}
W.~Schmidt.
\newblock A metrical theorem in diophantine approximation.
\newblock {\em Canad. J. Math.}, 12:619--631, 1960.

\bibitem{SchmidtMetrical2}
W.~M. Schmidt.
\newblock Metrical theorems on fractional parts of sequences.
\newblock {\em Trans. Amer. Math. Soc.}, 110:493--518, 1964.

\bibitem{Siegel}
C.~L. Siegel.
\newblock A mean value theorem in geometry of numbers.
\newblock {\em Ann. of Math. (2)}, 46:340--347, 1945.

\bibitem{TsengSTP}
J.~Tseng.
\newblock On circle rotations and the shrinking target properties.
\newblock {\em Discrete Contin. Dyn. Syst.}, 20(4):1111--1122, 2008.

\bibitem{T2}
J.~Tseng.
\newblock Badly approximable affine forms and {S}chmidt games.
\newblock {\em J. Number Theory}, 129(12):3020--3025, 2009.

\bibitem{UW}
C.~Uyanik and G.~Work.
\newblock The distribution of gaps for saddle connections on the octagon, 2014,
  arXiv:1409.0830.

\bibitem{Veech}
W.~A. Veech.
\newblock Gauss measures for transformations on the space of interval exchange
  maps.
\newblock {\em Ann. of Math. (2)}, 115(1):201--242, 1982.

\bibitem{Veechcount}
W.~A. Veech.
\newblock Siegel measures.
\newblock {\em Ann. of Math. (2)}, 148(3):895--944, 1998.

\bibitem{Walters}
P.~Walters.
\newblock {\em An introduction to ergodic theory}, volume~79 of {\em Graduate
  Texts in Mathematics}.
\newblock Springer-Verlag, New York-Berlin, 1982.

\bibitem{Zorich}
A.~Zorich.
\newblock Flat surfaces.
\newblock In {\em Frontiers in number theory, physics, and geometry. {I}},
  pages 437--583. Springer, Berlin, 2006.

\end{thebibliography}

\bibliographystyle{habbrv}

\end{document}